\newtheorem{lem}{Lemma}
\newtheorem{prop}[lem]{Proposition}
\newtheorem{thm}[lem]{Theorem}
\newtheorem{cor}[lem]{Corollary}
\newtheorem{conj}[lem]{Conjecture}
\theoremstyle{remark}
\newtheorem{rem}[lem]{Remark}
\theoremstyle{definition}
\def\Ex{{\mathbb E}}
\def\Pr{{\mathbb P}}
\def\er{{\mathbb R}}
\def\ve{\varepsilon}
\def\Log{\operatorname{Log}}
\def\dist{d}
\title[Chevet-type inequalities for subexponential Weibull variables]{Chevet-type inequalities for subexponential Weibull variables and estimates for norms of random matrices}
\author[R. Lata{\l}a]{Rafa{\l} Lata{\l}a}
\address{Institute of Mathematics, University of Warsaw, Banacha 2, 02--097 Warsaw, Poland.}
\email{rlatala@mimuw.edu.pl}
\author[M. Strzelecka]{Marta Strzelecka}
\address{Institute of Mathematics, University of Warsaw, Banacha 2, 02--097 Warsaw, Poland.}
\email{martast@mimuw.edu.pl}
\subjclass[2020]{%
Primary 60B20;  
Secondary  60E15;  
		46B09. 
}
\begin{document}

\begin{abstract}
We prove two-sided Chevet-type inequalities for independent symmetric Weibull random variables with shape parameter $r\in[1,2]$. 
We apply them to provide  two-sided estimates for operator norms  from $\ell_p^n$ to $\ell_q^m$ of random matrices $(a_ib_jX_{i,j})_{i\le m, j\le n}$, in the case when $X_{i,j}$'s are iid symmetric Weibull variables with shape parameter $r\in[1,2]$ or when $X$ is an isotropic log-concave unconditional random matrix. 
We also show how these Chevet-type inequalities imply two-sided bounds for maximal norms from $\ell_p^n$ to $\ell_q^m$ of submatrices of $X$ in both Weibull and log-concave settings.  
\end{abstract}

\maketitle


\section{Introduction and main results}

\subsection{Chevet-type two-sided bounds}

The classical Chevet inequality \cite{Ch} is a two-sided bound for operator norms of Gaussian random 
matrices with iid entries. It states that if $g_{i,j}$, $g_i$, $i,j\geq 1$ are iid standard Gaussian 
random variables,
then  for every pair of nonempty bounded   sets 
$S\subset \er^m$, $T\subset \er^n$ we have
\begin{equation}
\label{eq:Chevet}
\Ex\sup_{s\in S,t\in T}\sum_{i\leq m,j\leq n}g_{i,j}s_it_j\sim
\sup_{s\in S}\|s\|_2\Ex\sup_{t\in T}\sum_{j=1}^ng_{j}t_j
+\sup_{t\in T}\|t\|_2\Ex\sup_{s\in S}\sum_{i=1}^mg_{i}s_i.
\end{equation}
Here and in the sequel we write $f\lesssim g$ or $g\gtrsim f$, if $ f\le Cg$ for a universal constant $C$, 
and $f\sim g$ if $f\lesssim g \lesssim f$. (We write $\lesssim_{ \alpha}$, $\sim_{K,\gamma}$, etc.\ if 
the underlying constant depends on the parameters given in the subscripts.)
The original motivation for Chevet's result was convergence of Gaussian random sums in tensor
spaces, but  Chevet-type bounds (i.e., two-sided bounds allowing to compare the expected double supremum 
of the linear combination of some random variables with some simpler quantities, involving only 
expectations of a single supremum) are also a useful tool in providing bounds for operator norms of random 
matrices, as we shall see in the next subsection. 

Chevet's inequality was generalised to several settings. A version for iid stable r.v.'s was provided in \cite{GMZ}.
Moreover,  it was shown in \cite[Theorem~3.1]{ALLPT} that the upper bound in \eqref{eq:Chevet} holds if 
one replaces  -- on both sides of \eqref{eq:Chevet} -- iid Gaussians by iid symmetric exponential r.v.'s.  
It is however not hard to see (cf. Remark~3 following Theorem~3.1 in \cite{ALLPT}) that such a
bound cannot be reversed.

The first result of this note is a counterpart of \eqref{eq:Chevet}  for symmetric 
Weibull matrices $(X_{i,j})$ with a fixed (shape)  parameter $r\in [1,2]$, i.e., 
symmetric random variables $X_{i,j}$ such that 
\[
\Pr(|X_{i,j}|\geq t)=\exp(-t^r) \quad \mbox{for every } t\geq 0.
\]
It is natural to consider Weibull r.v.'s since they  interpolate between Gaussian and 
exponential r.v.'s -- the case $r=1$ corresponds to exponential r.v.'s, whereas in the case $r=2$
the r.v.'s $X_{i,j}$ are comparable to Gaussian r.v.'s with variance $1/2$ 
(see Lemma~\ref{lem:WeibGauss} below). In particular, our result in the case  $r=1$ provides two-sided bounds 
for iid exponential r.v.'s, which is therefore a better version of the aforementioned upper bound obtained in \cite{ALLPT} (in this note $\rho^*$  denotes the H{\"o}lder conjugate of $\rho\in [1,\infty]$, 
i.e., the unique element of $[1,\infty]$ satisfying $\frac 1\rho+\frac 1{\rho^*}=1$.).

\begin{thm}
\label{thm:chevetWeibull}
Let  $X_{i,j}$, $X_i$, $X_j$, $1\leq i\leq m$, $1\leq j\leq n$
be iid symmetric Weibull r.v.'s with  parameter $r\in [1,2]$.
Then for every  nonempty  bounded sets $S\subset\er^m$ and $T\subset \er^n$ we have
\begin{align*}
\Ex\sup_{s\in S,t\in T}\sum_{i\le m, j\le n} X_{i,j}s_it_j
\sim\ &\sup_{s\in S}\|s\|_{r^*}\Ex\sup_{t\in T}\sum_{j=1}^nX_{j}t_j
+\sup_{t\in T}\|t\|_{r^*}\Ex\sup_{s\in S}\sum_{i=1}^mX_{i}s_i
\\
&\qquad+
\Ex\sup_{s\in S,t\in T}\sum_{i\le m, j\le n} g_{i,j}s_it_j
\\
\sim\ &
\sup_{s\in S}\|s\|_{r^*}\Ex\sup_{t\in T}\sum_{j=1}^nX_{j}t_j
+\sup_{s\in S}\|s\|_2\Ex\sup_{t\in T}\sum_{j=1}^ng_{j}t_j
\\
&\qquad+\sup_{t\in T}\|t\|_{r^*}\Ex\sup_{s\in S}\sum_{i=1}^mX_{i}s_i
+\sup_{t\in T}\|t\|_2\Ex\sup_{s\in S}\sum_{i=1}^m g_{i}s_i.
\end{align*}
\end{thm}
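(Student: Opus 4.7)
The second displayed equivalence follows from the first by applying the classical Chevet inequality \eqref{eq:Chevet} to the Gaussian double supremum, so it suffices to prove the first $\sim$.

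\textbf{Upper bound.} The plan is to bound $\Ex\sup_{s,t}\sum X_{i,j}s_it_j$ by a two-metric generic-chaining argument. A Bernstein-type inequality for weighted sums of symmetric Weibull variables of shape $r\in[1,2]$ shows that the increments of the bilinear process have mixed sub-Gaussian / sub-$r$-Weibull tails
\begin{equation*}
\Pr\Bigl(\Bigl|\sum_{i,j}X_{i,j}(s_it_j-s_i't_j')\Bigr|\ge u\Bigr)\le 2\exp\Bigl(-c\min\bigl(u^2/\|s\otimes t-s'\otimes t'\|_2^2,\,u^r/\|s\otimes t-s'\otimes t'\|_{r^*}^r\bigr)\Bigr),
\end{equation*}
so the generic-chaining theorem for such processes yields $\Ex\sup_{(s,t)}\sum X_{i,j}s_it_j\lesssim\gamma_2(S\times T,d_2)+\gamma_r(S\times T,d_{r^*})$, with $d_\alpha$ the tensor $\ell_\alpha$-metric on $S\times T$. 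The main step is then the Chevet-style splitting of each $\gamma$-functional on the tensor. For $\gamma_2$ this is Chevet's original inequality. For $\gamma_r$ I would prove
\begin{equation*}
\gamma_r(S\times T,d_{r^*})\lesssim\sup_{s\in S}\|s\|_{r^*}\gamma_r(T,\|\cdot\|_{r^*})+\sup_{t\in T}\|t\|_{r^*}\gamma_r(S,\|\cdot\|_{r^*})
\end{equation*}
by constructing an admissible sequence on $S\times T$ from admissible sequences on $S$ and on $T$ and bounding block diameters in $d_{r^*}$ via the factorisation $s\otimes t-s'\otimes t'=(s-s')\otimes t+s'\otimes(t-t')$. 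Combined with the standard two-sided identification $\Ex\sup_{t\in T}\sum_jX_jt_j\sim\gamma_2(T,\|\cdot\|_2)+\gamma_r(T,\|\cdot\|_{r^*})$ for canonical Weibull processes (and symmetrically on $S$), this recovers the first right-hand side.

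\textbf{Lower bound.} I produce the three summands separately. For the first Weibull term, pick $s_0\in S$ with $\|s_0\|_{r^*}\ge\tfrac12\sup_s\|s\|_{r^*}$ and fix $s=s_0$; the resulting single-index process $\sum_jY_jt_j$ with $Y_j=\sum_i(s_0)_iX_{i,j}$ has sub-Gaussian tails at scale $\|s_0\|_2$ and sub-$r$-Weibull tails at scale $\|s_0\|_{r^*}$, so the $\gamma_2+\gamma_r$ characterization gives $\Ex\sup_t\sum Y_jt_j\gtrsim\|s_0\|_2\gamma_2(T,\|\cdot\|_2)+\|s_0\|_{r^*}\gamma_r(T,\|\cdot\|_{r^*})\gtrsim\|s_0\|_{r^*}\Ex\sup_t\sum X_jt_j$, where the last inequality uses $\|s_0\|_{r^*}\le\|s_0\|_2$ (since $r^*\ge 2$). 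The second Weibull term is obtained symmetrically. For the Gaussian term, write $X_{i,j}=\varepsilon_{i,j}|X_{i,j}|$; the standard Rademacher-vs-Gaussian comparison (Kahane), applied conditionally on the magnitudes, replaces the Rademacher double-sup by a Gaussian double-sup with the same weights $|X_{i,j}|$ up to a universal constant, after which Jensen's inequality (the double-sup being convex in the weights) lets one replace $|X_{i,j}|$ by $\Ex|X_{i,j}|\sim 1$, yielding $\gtrsim\Ex\sup_{s,t}\sum g_{i,j}s_it_j$.

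\textbf{Main obstacle.} The hardest step will be the Chevet-type product splitting of $\gamma_r$ for $r\in[1,2)$: no off-the-shelf analogue of the Gaussian Chevet bound exists at this level, and the tensor admissible sequence has to be built by hand, with block diameters controlled in the $\ell_{r^*}$-metric via the factorisation above and the one-variable admissible sequences tuned so that the weight $2^{n/r}$ in the definition of $\gamma_r$ matches the diameter contributions from both factors.
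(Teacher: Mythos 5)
Your upper bound is essentially the paper's argument: chaining for the mixed $\gamma_2+\gamma_r$ functional on the tensor index set, the splitting of $\gamma_r(S\otimes T,\dist_{r^*})$ via admissible sequences built from those of $S$ and $T$ and the factorisation $s\otimes t-s'\otimes t'=(s-s')\otimes t''+s'\otimes(t'-t'')$, Chevet for the $\gamma_2$ part, and the two-sided identification \eqref{eq:gamma-est-weibull} on each factor. (In fact the splitting you single out as the main obstacle is a few lines; the paper gets \emph{both} directions simultaneously by applying the two-sided Talagrand--Lata{\l}a characterization \eqref{eq:gamma-est-weibull} to the Weibull canonical process indexed by $S\otimes T\subset\er^{mn}$, using for the lower bound only the slice identity $\gamma_r(S\otimes\{t\},\dist_{r^*})=\|t\|_{r^*}\gamma_r(S,\dist_{r^*})$ together with Chevet's lower bound \eqref{eq:Chevet} for the Gaussian term, which supplies the missing $\sup_{t}\|t\|_{r^*}\gamma_2(S,\dist_2)$ and $\sup_{s}\|s\|_{r^*}\gamma_2(T,\dist_2)$ pieces needed to reassemble $\Ex\sup_{s\in S}\sum_i X_is_i\sim\gamma_r(S,\dist_{r^*})+\gamma_2(S,\dist_2)$.)

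Your lower bound, however, has two genuine gaps. First, for the Weibull terms you fix $s_0$ and claim that since $Y_j=\sum_i(s_0)_iX_{i,j}$ has sub-Gaussian tails at scale $\|s_0\|_2$ and sub-Weibull tails at scale $\|s_0\|_{r^*}$, ``the $\gamma_2+\gamma_r$ characterization gives'' $\Ex\sup_t\sum_jY_jt_j\gtrsim\|s_0\|_2\gamma_2(T,\dist_2)+\|s_0\|_{r^*}\gamma_r(T,\dist_{r^*})$. Tail \emph{upper} bounds can only yield chaining \emph{upper} bounds; a minoration of this type requires matching lower bounds on the increments plus the hard (Sudakov/Talagrand) direction of a characterization theorem valid for the variables $Y_j$ --- which are weighted sums of Weibulls, not Weibulls, and not obviously within the scope of \cite{Tal} or \cite{LaT} (e.g.\ log-concavity of their tails is unclear). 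This is precisely the difficulty the paper avoids by slicing inside the functional $\gamma_r(S\otimes T,\dist_{r^*})$ rather than slicing the process. Second, for the Gaussian term you invoke the Rademacher-versus-Gaussian comparison in the false direction: conditionally on the moduli, the Rademacher double supremum does \emph{not} dominate the Gaussian one with the same weights up to a universal constant --- in general one loses a factor of order $\sqrt{\Log(mn)}$ (take $S,T$ so that the double supremum is a coordinate maximum). The correct, and easy, argument is stochastic domination: since $\Pr(|g|\ge t)\le 2e^{-t^r/2}$ for $r\le2$, one couples so that $|\delta_{i,j}g_{i,j}|\le 2^{1/r}|X_{i,j}|$ a.s.\ with Bernoulli $\delta_{i,j}$, applies the contraction principle conditionally on the moduli, and removes $\delta$ by Jensen, exactly as in Lemma~\ref{lem:WeibGauss} and Lemma~\ref{lem:Weibull-to-psi_r} of the paper. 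As written, the lower half of the theorem is not proved.
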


This result generalizes to the case of independent $\psi_r$ random variables. There are several equivalent definitions of $\psi_r$ r.v.'s -- in this paper we say that a random variable $Z$ 
is $\psi_r$  with constant $\sigma$ if
\[
\Pr(|Z|\geq t) \leq  2e^{-(t/\sigma)^r} \qquad \text{for every } t\geq 0.
\]
One of the reasons to investigate Weibull r.v.'s is that 
Weibulls with parameter $r$ are extremal in the class of $\psi_r$ random variables, which appear frequently in probability theory, statistics, and their applications, e.g., in convex geometry (see, e.g.,  \cite{BGVVbook,CGLP2012,V2018}). In particular, Theorem \ref{thm:chevetWeibull} and a standard  estimate (see Lemma~\ref{lem:Weibull-to-psi_r}  below)  yield the following result (observe that we do not assume that the r.v.'s $Y_{i,j}$ are  identically distributed).

\begin{cor}
\label{cor:psi-r-Chevet}
Let  $X_1, X_2, \ldots$  be  iid symmetric Weibull r.v.'s with  parameter $r\in [1,2]$, 
and let $Y_{i,j}$, $1\leq i\leq m$, $1\leq j\leq n$ be independent centered $\psi_r$ 
random variables with constant $\sigma$. 
Then for every   bounded nonempty sets $S\subset\er^m$ and $T\subset \er^n$ we have
\begin{align*}
\Ex\sup_{s\in S,t\in T}\sum_{i\le m, j\le n} Y_{i,j}s_it_j
\lesssim\ &  \sigma \biggl(
\sup_{s\in S}\|s\|_{r^*}\Ex\sup_{t\in T}\sum_{j=1}^nX_{j}t_j
+\sup_{s\in S}\|s\|_2\Ex\sup_{t\in T}\sum_{j=1}^ng_{j}t_j
\\
&\qquad+\sup_{t\in T}\|t\|_{r^*}\Ex\sup_{s\in S}\sum_{i=1}^mX_{i}s_i
+\sup_{t\in T}\|t\|_2\Ex\sup_{s\in S}\sum_{i=1}^m g_{i}s_i  \biggr).
\end{align*}
\end{cor}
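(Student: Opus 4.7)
The plan is to derive Corollary~\ref{cor:psi-r-Chevet} from Theorem~\ref{thm:chevetWeibull} by a routine two-step reduction: first symmetrize the centered random variables $Y_{i,j}$, and then compare the resulting symmetric $\psi_r$ process with its Weibull counterpart.

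\textbf{Step 1 (Symmetrization).} I would introduce independent copies $Y'_{i,j}$ of $Y_{i,j}$ on an enlarged probability space. Since $\Ex Y'_{i,j}=0$, Jensen's inequality applied inside the supremum gives
\[
\Ex\sup_{s\in S,\,t\in T}\sum_{i,j}Y_{i,j}s_it_j\;\le\;\Ex\sup_{s\in S,\,t\in T}\sum_{i,j}(Y_{i,j}-Y'_{i,j})s_it_j.
\]
The random vector $(Y_{i,j}-Y'_{i,j})$ is symmetric, so it has the same joint law as $(\varepsilon_{i,j}(Y_{i,j}-Y'_{i,j}))$ for independent Rademacher signs $\varepsilon_{i,j}$ (chosen independent of everything else). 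A triangle inequality then yields
\[
\Ex\sup_{s,t}\sum_{i,j}Y_{i,j}s_it_j\;\le\;2\Ex\sup_{s,t}\sum_{i,j}\varepsilon_{i,j}Y_{i,j}s_it_j.
\]

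\textbf{Step 2 (Comparison with Weibulls).} The variables $\varepsilon_{i,j}Y_{i,j}$ are now symmetric and $\psi_r$ with constant $\sigma$. Conditioning on $(|Y_{i,j}|)$ and appealing to Lemma~\ref{lem:Weibull-to-psi_r}, which encodes the extremality of the symmetric Weibull distribution among symmetric $\psi_r$ variables, I would replace $\varepsilon_{i,j}Y_{i,j}$ by $\sigma X_{i,j}$ up to a universal constant, arriving at
\[
\Ex\sup_{s,t}\sum_{i,j}\varepsilon_{i,j}Y_{i,j}s_it_j\;\lesssim\;\sigma\,\Ex\sup_{s,t}\sum_{i,j}X_{i,j}s_it_j.
\]
Concretely, the bound $\|Y_{i,j}\|_p\lesssim\sigma p^{1/r}\sim\sigma\|X_{i,j}\|_p$ for every $p\ge 1$ follows at once from the $\psi_r$ tail estimate, and this moment domination (together with symmetry and independence) is precisely what is needed to transfer the bilinear Bernoulli process from $(Y_{i,j})$ to $\sigma(X_{i,j})$.

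\textbf{Step 3 (Weibull Chevet).} It remains to apply Theorem~\ref{thm:chevetWeibull} to the iid symmetric Weibull matrix $(X_{i,j})$, which bounds the last display by exactly the four-term right-hand side of Corollary~\ref{cor:psi-r-Chevet} (multiplied by $\sigma$). Combining the three steps completes the proof.

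The only nontrivial ingredient is Step~2, and it is precisely what the standard estimate in Lemma~\ref{lem:Weibull-to-psi_r} is designed to provide; Steps~1 and~3 are entirely off-the-shelf. I expect no further obstacle, since once the $\psi_r$ process has been replaced by a Weibull process, Theorem~\ref{thm:chevetWeibull} takes over.
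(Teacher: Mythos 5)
Your proposal is correct and follows essentially the same route as the paper: symmetrize the centered $Y_{i,j}$ (Jensen plus Rademacher signs, at the cost of a factor $2$), dominate the resulting symmetric $\psi_r$ process by the Weibull one via Lemma~\ref{lem:Weibull-to-psi_r}, and finish with Theorem~\ref{thm:chevetWeibull}. A minor aside: your remark that the moment domination $\|Y_{i,j}\|_{\rho}\lesssim \sigma\rho^{1/r}\sim\sigma\|X_{i,j}\|_{\rho}$ is ``precisely what is needed'' misstates the mechanism --- Lemma~\ref{lem:Weibull-to-psi_r} is proved via tail domination, a pointwise coupling and the contraction principle, not via moment comparison --- but since you invoke the lemma itself as a black box, this does not affect the validity of the argument.
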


Let us now focus on the case $r=1$.
Random vectors with independent symmetric exponential coordinates are extremal in the class
of unconditional isotropic log-concave random vectors  (cf., \cite{BN,LaWTD}). Recall that we call a 
random vector $Z$ in $\er^k$ log-concave, if for any compact nonempty sets $K,L\subset \er^k$ and 
$\lambda\in [0,1]$ we have
\[
\Pr\bigl(Z\in \lambda K+(1-\lambda)L\bigr)\geq \Pr(Z\in K)^{\lambda}\Pr(Z\in L)^{1-\lambda}.
\]
Log-concave vectors are a natural generalization of the class of uniform distributions over 
convex bodies and they are widely investigated in convex geometry and high dimensional 
probability (see the monographs \cite{AAGMbook,BGVVbook}). By the result of Borell \cite{Borell}  we know that 
log-concave vectors with nondegenerate covariance matrix are exactly the vectors with a log-concave 
density, i.e., with a density whose logarithm is a concave function with values in $[-\infty,\infty)$.  

A random vector $Z$ in $\er^k$ is called unconditional, if for every choice of signs $\eta\in\{-1,1 \}^k$ the vectors $Z$ and $(\eta_iZ_i)_{i\le k}$ are equally distributed (or, equivalently, that $Z$ and $(\ve_iZ_i)_{i\le k}$ are equally distributed, where $\ve_1, \ldots ,\ve_k$ are iid symmetric Bernoulli variables independent of $Z$). A~random vector is called isotropic if it is centered and its covariance matrix is the identity.

\cite[Theorem~2]{LaWTD} yields   that for every
bounded nonempty set $U$ in $\er^k$ (see Lemma \ref{lem:Weibull-to-unclogconc} below for a standard reduction to the case of symmetric index sets) and every $k$-dimensional unconditional  isotropic log-concave  
random vector $Y$, 
\begin{equation}	
\label{eq:comp-log-concave-exponential}
\Ex \sup_{u\in U}\sum_{i=1}^k u_iY_i \lesssim\, \Ex \sup_{u\in U}\sum_{i=1}^k u_iE_i,
\end{equation}
where $E_1,E_2,\ldots,E_k$ are independent symmetric exponential r.v.'s (i.e., iid Weibull r.v.'s with shape parameter $r=1$). Hence, Theorem \ref{thm:chevetWeibull} yields the
following Chevet-type bound for isotropic unconditional log-concave random matrices.

\begin{cor}
\label{cor:unclogconc-Chevet}
Let  $E_1, E_2, \ldots$  be  iid symmetric exponential r.v.'s, 
and let $Y=(Y_{i,j})_{1\leq i\leq m,1\leq j\leq n}$ be a random matrix with isotropic unconditional log-concave distribution on $\er^{mn}$.
Then for every  bounded nonempty sets $S\subset\er^m$ and $T\subset \er^n$ we have
\begin{align*}
\Ex\sup_{s\in S,t\in T}\sum_{i\le m, j\le n} Y_{i,j}s_it_j
\lesssim\ &  
\sup_{s\in S}\|s\|_{\infty}\Ex\sup_{t\in T}\sum_{j=1}^nE_{j}t_j
+\sup_{s\in S}\|s\|_2\Ex\sup_{t\in T}\sum_{j=1}^ng_{j}t_j
\\
&\qquad+\sup_{t\in T}\|t\|_{\infty}\Ex\sup_{s\in S}\sum_{i=1}^mE_{i}s_i
+\sup_{t\in T}\|t\|_2\Ex\sup_{s\in S}\sum_{i=1}^m g_{i}s_i.
\end{align*}
\end{cor}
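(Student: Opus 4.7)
The plan is to reduce the log-concave case to the exponential case (i.e.\ Weibull with $r = 1$) via the comparison inequality~\eqref{eq:comp-log-concave-exponential}, and then to invoke Theorem~\ref{thm:chevetWeibull} at $r = 1$, for which $r^{*} = \infty$. The proof is thus a two-step concatenation with no genuinely new computation.

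First, view the random matrix $Y = (Y_{i,j})_{i \leq m,\, j \leq n}$ as a single random vector in $\er^{mn}$; by hypothesis this vector is isotropic, unconditional and log-concave, so~\eqref{eq:comp-log-concave-exponential} applies with $k = mn$. Introduce the bounded nonempty index set
\[
U = \bigl\{(s_{i}t_{j})_{i \leq m,\, j \leq n} : s \in S,\ t \in T\bigr\} \subset \er^{mn},
\]
so that
\[
\Ex \sup_{s \in S,\, t \in T} \sum_{i \leq m,\, j \leq n} Y_{i,j} s_{i} t_{j} = \Ex \sup_{u \in U} \sum_{(i,j)} u_{(i,j)} Y_{(i,j)}.
\]
Applying~\eqref{eq:comp-log-concave-exponential}, combined with the standard reduction to symmetric index sets supplied by Lemma~\ref{lem:Weibull-to-unclogconc}, yields
\[
\Ex \sup_{u \in U} \sum_{(i,j)} u_{(i,j)} Y_{(i,j)} \lesssim \Ex \sup_{u \in U} \sum_{(i,j)} u_{(i,j)} E_{(i,j)} = \Ex \sup_{s \in S,\, t \in T} \sum_{i \leq m,\, j \leq n} E_{i,j} s_{i} t_{j},
\]
where $(E_{i,j})_{i\leq m,\, j\leq n}$ is an array of iid symmetric exponential variables (equivalently, iid Weibulls with shape parameter $r = 1$).

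Second, apply Theorem~\ref{thm:chevetWeibull} with $r = 1$ directly to the exponential matrix on the right-hand side. Since $1^{*} = \infty$, the four terms produced by the second equivalence in Theorem~\ref{thm:chevetWeibull} coincide with the four terms appearing on the right-hand side of the statement we wish to prove, and the corollary follows.

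The only place where any care is required is the first step: \eqref{eq:comp-log-concave-exponential} is formulated in \cite{LaWTD} for symmetric index sets, whereas our $U$ need not be symmetric. Handling this is exactly the content of Lemma~\ref{lem:Weibull-to-unclogconc}, so no genuine obstacle arises beyond an appropriate bookkeeping of constants.
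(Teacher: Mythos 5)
Your proof is correct and follows exactly the paper's route: reduce to the iid exponential matrix via \eqref{eq:comp-log-concave-exponential} (with Lemma~\ref{lem:Weibull-to-unclogconc} covering non-symmetric index sets such as $U=\{(s_it_j)\}$), then apply the upper bound of Theorem~\ref{thm:chevetWeibull} with $r=1$, $r^*=\infty$. Nothing to add.
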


Let us stress that estimate \eqref{eq:comp-log-concave-exponential} is no longer true for general isotropic log-concave  vectors as  \cite[Theorem~5.1]{ALLPT} shows. We do not know  whether there exists a counterpart of Corollary~\ref{cor:unclogconc-Chevet} 
for arbitrary isotropic log-concave random matrices.

The next subsection reveals how our Chevet-type inequalities imply precise bounds for norms of random matrices.

\subsection{Norms of random matrices}
Initially motivated by mathematical physics, the theory of random matrices \cite{AGZbook,TaoMatrBook} is now used in many areas of mathematics. 
A great effort was made to understand the asymptotic behaviour of the edge of the spectrum of  random matrices with 
independent entries. In particular, numerous bounds on their spectral norm (i.e., the largest singular value)
were derived.  The seminal result of Seginer  \cite{Seginer} states that in the iid case 
the expectation of the 
spectral norm is of the same order as the expectation of the maximum Euclidean norm of rows and columns of a given random matrix. 
We know from \cite{LvHY} that the same is true for the structured Gaussian matrices $G_A=(a_{i,j}g_{i,j})_{i\le m, j\le n}$, 
where $g_{i,j}$'s are iid standard Gaussian r.v.'s, and $(a_{i,j})_{i,j}$ is a deterministic matrix encoding the covariance 
structure of $G_A$. 
Although in the structured Gaussian case we still assume that the entries are independent, obtaining optimal bounds 
in this case was much more challenging than in the non-structured case. Upper bounds for the spectral norm of some Gaussian random matrices with dependent entries were obtained very recently in \cite{BBvH2023}. 
	
In this note we are interested in bounding more general operator norms of random matrices.
 For $\rho\in [1,\infty)$ by $\|x\|_{ \rho} = (\sum_{i}|x_i|^{ \rho})^{1/{ \rho}}$,
we denote the $\ell_\rho$-norm of a vector $x$. 
A~similar notation, $\|S\|_\rho = (\Ex|S|^\rho)^{1/\rho}$ is used for  the $L_\rho$-norm of a random variable $S$.  For $\rho=\infty$ we write $\|x\|_{\infty}:=\max_{i}|x_i|$.
By $B_{\rho}^k$ we denote the unit ball in $(\er^k, \|\cdot\|_\rho)$.
 For an $m\times n$ matrix $X=(X_{i,j})_{i\le m, j\le n}$ we denote by
\[
\|X\|_{\ell_p^n\to \ell_q^m} 
= \sup_{t\in B_p^n} \|Xt\|_q 
= \sup_{t\in B_p^n, s\in B_{q^*}^m} s^TXt 
= \sup_{t\in B_p^n, s\in B_{q^*}^m} \sum_{i\le m, j\le n} X_{i,j}s_it_j
\]
 its operator norm from $\ell_p^n$ to $\ell_q^m$. 
 In particular, $\|X\|_{\ell_2^n\to \ell_2^m} $ is the spectral norm of $X$.
When $(p,q)\neq(2,2)$, the moment method used to upper bound the operator norm cannot be 
employed.  This is one of the reasons why upper bounds for $\Ex \|X\|_{\ell_p^n\to \ell_q^m}$ 
are known only in some special cases,  and most of them  are  optimal only up to logarithmic
factors. 
Before we move to a brief survey of these results, let us note that bounds for 
$\Ex \|X\|_{\ell_p^n\to \ell_q^m} $  yield both tail bounds for $\|X\|_{\ell_p^n\to \ell_q^m} $ 
and bounds for $(\Ex \|X\|_{\ell_p^n\to \ell_q^m}^{\rho})^{1/\rho}$ for every $\rho\ge 1$, 
provided that the entries of $X$ satisfy a mild regularity assumption; see 
\cite[Proposition~1.16]{APSS} for more details.

Chevet's inequality together with, say, Remark~\ref{rem:estellpGauss} below easily yields the 
following two-sided estimate for $\ell_p^n\to \ell_q^m$ norms  of iid Gaussian matrices 
for every $p,q\in[1,\infty] $,
\begin{align}
\label{eq:lplqgauss}
\Ex\bigl\|(g_{i,j})_{i\leq m,j\leq n}\bigr\|_{\ell_p^n\rightarrow\ell_q^m}
&\sim 
\begin{cases}
m^{1/q-1/2}n^{1/p^*}+n^{1/p^*-1/2}m^{1/q},&p^*,q\leq 2,
\\
 \sqrt{p^*\wedge \Log n}\:n^{1/p^*}m^{1/q-1/2}+m^{1/q},&q\leq 2\leq p^*,
\\
n^{1/p^*}+ \sqrt{q\wedge \Log m}\: m^{1/q}n^{1/p^*-1/2},&p^*\leq 2\leq q,
\\
\sqrt{p^*\wedge \Log n}\: n^{1/p^*}+\sqrt{q\wedge \Log m}\: m^{1/q},&2\leq q,p^*
\end{cases}
\\ 
\notag
&\sim \sqrt{p^* \wedge \Log n}\: m^{(1/q-1/2)\vee 0} n^{1/p^*} +  
\sqrt{q \wedge \Log m}\: n^{(1/p^*-1/2)\vee 0} m^{1/q},
\end{align}   
where to simplify the notation we define 
\[
\Log n=\max\{1,\ln n\}.
\]

If the entries $X_{i,j}$ are bounded and centered, then it is  known that
\begin{equation}
\label{eq:Radsimple}
\Ex\bigl\|(X_{i,j})_{i\leq m,j\leq n}\bigr\|_{\ell_p^n\rightarrow\ell_q^m}\lesssim_{p,q}
\left\{
\begin{array}{ll}
m^{1/q-1/2}n^{1/p^*}+n^{1/p^*-1/2}m^{1/q},&p^*,q\leq 2,
\\
m^{1/q-1/2}n^{1/p^*}+m^{1/q},&q\leq 2\leq p^*,
\\
n^{1/p^*}+n^{1/p^*-1/2}m^{1/q},&p^*\leq 2\leq q,
\\
 n^{1/p^*}+m^{1/q},&2\leq p^*,q. 			
\end{array}
\right. 
\end{equation}
This was proven in \cite{BGN} in the case $p=2\le q$ and may be easily extrapolated to the whole 
range $1\le p,q\le \infty$ (see \cite{Bennett,CarlMaureyPuhl}, cf., \cite[Remark 4.2]{APSS}).
Moreover, in the case of matrices with iid symmetric Bernoulli r.v.'s inequality \eqref{eq:Radsimple} may be reversed. 
In \cite[Lemma~172]{Naor}   it was shown that in the square case (i.e., when $m=n$)  estimate \eqref{eq:Radsimple}
holds with a constant non depending on $p$   and~$q$. The two-sided estimate  for 
rectangular Bernoulli matrices is more complicated -- we capture the correct dependence 
of the underlying constants on $p$ and $q$ in an upcoming article \cite{LSiidmatr}. As for the Gaussian random matrices, 
also the Bernoulli \textit{structured} case is much more difficult to deal with, even when $p=q=2$. 
Nevertheless, in this case an upper bound optimal up to $\log\log$ factor is known in the case of circulant matrices
due to \cite[Theorem~1.3]{latala-swiatkowski2021norms}.

The case of structured Gaussian matrices in the range $p\le 2\le q$ was investigated in \cite{GHLP}; in this case
\begin{align*}
\nonumber
\Ex \|G_A\|_{ \ell_{p}^n\to\ell_q^m}
\sim_{p,q}
\max_{j \leq n}\|(a_{i,j})_{i=1}^m\|_q 
+ (\Log m)^{1/q}\Bigl(
\max_{i \leq m}\|(a_{i,j})_{j=1}^n\|_{p^*} 
+  \Ex\max_{i\leq m, j\leq n}|a_{i,j}g_{i,j}|\Bigr).
\end{align*}
Since in the range $p\le 2\le q$ we have
\[
\|(a_{i,j})_{i=1}^m\|_q + \|(a_{i,j})_{j=1}^n\|_{p^*} + \Ex\max_{i\leq m, j\leq n}|a_{i,j}g_{i,j}|
\sim_{p,q} 
\Ex\max_{i\le m} \| (a_{i,j}g_{i,j})_j\|_{p^\ast} +\Ex\max_{j\le n}  \| (a_{i,j}g_{i,j})_i\|_q 
\]
(see \cite[Remark~1.1]{APSS}), it seems natural to expect, that, as in the case $p=q=2$, 
\[
\Ex \|G_A\|_{ \ell_{p}^n\to\ell_q^m} \operatorname{\sim}^{\hspace{-5pt}?}_{p,q}
\Ex\max_{i\le m} \| (a_{i,j}g_{i,j})_j\|_{p^\ast} +\Ex\max_{j\le n}  \| (a_{i,j}g_{i,j})_i\|_q .
\]
However, this bound fails outside the range $p\le 2\le q$ (see \cite[Remark~1.1]{APSS}) and, 
as discussed in \cite{APSS}, 
a more reasonable guess is that
\begin{equation} 
\label{eq:conjAPSS}
\Ex \|G_A\colon \ell_{p}^n\to\ell_q^m\|
\operatorname{\sim}^{\hspace{-5pt}?}_{p,q} D_1+D_2 + D_3,
\end{equation}
where 
\begin{equation*}
\begin{split}
D_1  &\coloneqq \|(a_{i,j}^2)_{i\le m, j\le n}\colon \ell^n_{p/2} \to \ell^m_{q/2}\|^{1/2},\\
D_2  &\coloneqq \|(a_{j,i}^2)_{j\le n, i\le m} \colon \ell^m_{q^*/2} \to \ell^n_{p^*/2}\|^{1/2},
\end{split}
\qquad\qquad
\begin{split}
b_j&\coloneqq \|(a_{i,j})_{i\le m} \|_{2q/(2-q)}, \\
d_i &\coloneqq \|(a_{i,j})_{j\le n} \|_{2p/(p-2)},
\end{split}
\end{equation*}
\begin{equation*}
D_3=
\begin{cases}
  \Ex \max_{i \le m,j\le n} |a_{i,j}g_{i,j}| &  \text{if }\  p\leq 2\leq q,\\
  \max_{j\le n}\sqrt{\ln (j+1)} b_j^{*}  &    \text{if }\ p\leq q\leq 2,\\
  \max_{i\le m}\sqrt{\ln (i+1)} d_i^{*}&   \text{if } \ 2\leq p \leq q,\\
  0 &   \textrm{if } \ q<p,
\end{cases} 
\end{equation*}
and $(c_i^*)_{i=1}^k$ is the nonincreasing rearrangement of $(|c_i|)_{i=1}^k$.
It is known by \cite[(1.13) and Corollary~1.4]{APSS}  that \eqref{eq:conjAPSS} holds up to logarithmic terms.
	
It seems that proving the correct asymptotic bound for the operator norm from $\ell_p$ to $\ell_q$ of a~structured Gaussian 
is a challenge. All the more, there is currently no hope  of getting  two-sided bounds in a general case of the structured 
matrices $(a_{i,j}X_{i,j})_{i\le m,j\le n}$ for a wider class of iid random variables $X_{i,j}$.
Therefore, in this paper we restrict ourselves to a special class of variance structures  $(a_{i,j})_{i\le m,j\le n}$: 
the tensor structure. In other words, we  assume that the  structure has a tensor form $a_{i,j}=a_ib_j$ for some 
$a\in \er^m$ and $b\in \er^n$. 
In this case Chevet-type bounds stated in Theorem ~\ref{thm:chevetWeibull}  allow us to provide  two-sided bounds --- with constants 
independent of $p$ and $q$ --- for exponential, Gaussian, and, more general, Weibull tensor structured random matrices.
Since these bounds  have  quite complicated forms we postpone the exact formulations to Section~\ref{sect:tensorweights}. Let us only 
announce here two corollaries from these bounds. 
The first one is an affirmative answer to conjecture~\eqref{eq:conjAPSS}  in the case when 
$(a_{i,j})$ has a tensor form (see Corollary \ref{cor:conj-tensor-case} below). 
In Section~\ref{sect:tensorweights} we  state also a counterpart of 
this conjecture for weighted Weibull matrices and verify it in the tensor case.
Moreover, using \eqref{eq:comp-log-concave-exponential} and a bound for $\Ex \|(a_ib_jE_{i,j})\|_{\ell_p^n \to \ell_q^m}$
we provide a two-sided bound for weighted unconditional isotropic log-concave random matrices $(a_{i,j}Y_{i,j})$ in the tensor case 
$a_{i,j}=a_ib_j$ (see Corollary \ref{cor:tensor-log-concave}  below). We do not know whether a  similar bound holds without the
 unconditionality assumption.
 
Let us now move to another application of Theorem~\ref{thm:chevetWeibull}.
The authors of \cite{ALLPT} used their Chevet-type bound to provide upper bounds for maximal spectral norms of
$k\times l$ submatrices of unconditional  isotropic log-concave random matrices (which turn out to be sharp in the
case of independent exponential entries). Our improved Chevet-type inequality allows us to extend this result and derive
two-sided bounds for maximal $\ell_p \to \ell_q$ norms of submatrices. Let us first formulate the result for Weibull
matrices. By $l_p^J$ we denote the space $\{(x_j)_{j\in J}\colon \sum_{j\in J} |x_j|^p\le 1 \}$ 
equipped with the norm $\|x\|_p\coloneqq ( \sum_{j\in J} |x_j|^p)^{1/p}$.

\begin{thm}
\label{thm:submatricespsir}
Let $r\in [1,2]$ and $(X_{i,j})_{i\le m, j\le n}$ be independent, centered, $\psi_r$ random variables with constant $\sigma$.
Then for any $1\leq k\leq m$, $1\leq l\leq n$ and $p,q\in [1,\infty]$,
\begin{align*}
\Ex \sup_{I, J} \bigl\| (X_{i,j})_{i\in I, j\in J} \bigr\|_{\ell_p^J\to \ell_q^I}
\lesssim \sigma\Bigl(
&k^{(1/q-1/r)\vee 0} l^{1/p^*}\Bigl(\Log\Bigl(\frac{n}l\Bigr) \vee (p^*\wedge \Log l) \Bigr)^{1/r}
\\
& + k^{(1/q-1/2)\vee 0}l^{1/p^*}\Bigl(\Log\Bigl(\frac{n}l\Bigr) \vee (p^*\wedge \Log l\Bigr)^{1/2}
\\
&+ l^{(1/p^*-1/r)\vee 0}k^{1/q}\Bigl(\Log\Bigl(\frac{m}k\Bigr) \vee (q\wedge \Log k) \Bigr)^{1/r} 
\\
&+ l^{(1/p^*-1/2)\vee 0}k^{1/q}\Bigl(\Log\Bigl(\frac{m}k\Bigr) \vee (q\wedge \Log k) \Bigr)^{1/2}
\Bigr),
\end{align*}
where the supremum runs over all sets $I\subset[m]$, $J\subset[n]$ such that $|I|=k$ and $|J|=l$.
Moreover, the above bound may be reversed if $(X_{i,j})_{i\le m, j\le n}$ are iid symmetric Weibull r.v.'s
with parameter~$r$.
\end{thm}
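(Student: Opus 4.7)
The plan is to apply our Chevet-type bounds (Corollary~\ref{cor:psi-r-Chevet} for the upper bound and Theorem~\ref{thm:chevetWeibull} for the iid Weibull lower bound) to the sets
\[
S \coloneqq \bigcup_{I\subset [m],\,|I|=k} B_{q^*}^I\subset \er^m,
\qquad
T \coloneqq \bigcup_{J\subset [n],\,|J|=l} B_p^J\subset \er^n,
\]
since by duality
\[
\sup_{s\in S,\,t\in T}\sum_{i,j}Z_{i,j}s_it_j
= \sup_{|I|=k,\,|J|=l}\bigl\|(Z_{i,j})_{i\in I, j\in J}\bigr\|_{\ell_p^J\to\ell_q^I}
\]
for any matrix $Z$. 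It thus suffices to estimate the four ``radius $\times$ one-dimensional expected supremum'' pairs appearing on the right-hand side of the Chevet-type inequality.

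For the radii, the inclusion $B_{q^*}^I\hookrightarrow \ell_\alpha^I$ on a $k$-dimensional coordinate subspace gives $\|s\|_\alpha \le k^{(1/\alpha-1/q^*)_+}$ for $s\in B_{q^*}^I$, and this bound is achieved (up to a constant) by $s=k^{-1/q^*}\mathbf 1_I$ when $\alpha\le q^*$ and by $s=e_i$ with $i\in I$ otherwise. Substituting $\alpha\in\{r^*,2\}$ yields
\[
\sup_{s\in S}\|s\|_{r^*}\sim k^{(1/q-1/r)_+},\qquad \sup_{s\in S}\|s\|_2\sim k^{(1/q-1/2)_+},
\]
and analogously $\sup_{t\in T}\|t\|_{r^*}\sim l^{(1/p^*-1/r)_+}$ and $\sup_{t\in T}\|t\|_2\sim l^{(1/p^*-1/2)_+}$.

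For the one-dimensional expected suprema, duality yields
\[
\sup_{t\in T}\sum_{j=1}^n X_j t_j = \max_{|J|=l}\bigl\|(X_j)_{j\in J}\bigr\|_{p^*},
\]
i.e.\ the $\ell_{p^*}$-norm of the top-$l$ order statistics of $|X_1|,\ldots,|X_n|$. Using the classical estimate $|X_{(i)}|\sim (\Log(n/i))^{1/r}$ for iid symmetric Weibull variables with shape $r$, a direct summation (splitting according to whether the index $i$ lies below or above the threshold where $(\Log(n/i))^{p^*/r}$ stops being comparable to its boundary value at $i=l$) gives
\[
\Ex\max_{|J|=l}\bigl\|(X_j)_{j\in J}\bigr\|_{p^*}\;\sim\;l^{1/p^*}\bigl(\Log(n/l)\vee(p^*\wedge\Log l)\bigr)^{1/r},
\]
and the analogous Gaussian expression with exponent $1/2$ in place of $1/r$. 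Multiplying the four pairs of (radius)$\times$(expected supremum) produces exactly the four summands in the theorem. The $\psi_r$ upper bound acquires the factor $\sigma$ from Corollary~\ref{cor:psi-r-Chevet}, while the iid Weibull lower bound follows from the two-sided nature of Theorem~\ref{thm:chevetWeibull} together with the matching lower bounds on the radii and on the expected top-$l$ norms recorded above.

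The main technical obstacle is the top-$l$ $\ell_{p^*}$-norm estimate: one must track carefully which of the competing quantities $\Log(n/l)$ and $p^*\wedge\Log l$ dominates in each parameter regime, and upgrade the sum $\sum_{i\le l}\Ex|X_{(i)}|^{p^*}$ to the actual expectation $\Ex\max_{|J|=l}\|(X_j)_{j\in J}\|_{p^*}$, which requires a mild concentration argument for the Weibull (respectively Gaussian) order statistics around their means, especially when $p^*$ is large. Once this top-$l$ order statistics lemma is isolated, the rest of the proof is a matter of bookkeeping the four Chevet summands.
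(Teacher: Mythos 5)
Your reduction is exactly the one the paper uses: apply Corollary~\ref{cor:psi-r-Chevet} (upper bound) and the two-sided Theorem~\ref{thm:chevetWeibull} (for the reversal) to $S=\bigcup_{|I|=k}B_{q^*}^I$ and $T=\bigcup_{|J|=l}B_{p}^{J}$, compute the radii $\sup_{s\in S}\|s\|_{r^*}=k^{(1/q-1/r)\vee 0}$, $\sup_{s\in S}\|s\|_{2}=k^{(1/q-1/2)\vee 0}$ (and likewise for $T$), and reduce everything to the expected $\ell_q$- resp.\ $\ell_{p^*}$-norms of the top $k$ resp.\ $l$ order statistics. Up to that point the argument is correct and matches the paper. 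The genuine gap is the remaining ingredient, namely the two-sided estimate
\begin{equation*}
\Ex\Bigl(\sum_{i=1}^{l}(X_i^*)^{p^*}\Bigr)^{1/p^*}\sim l^{1/p^*}\Bigl(\Log\Bigl(\frac{n}{l}\Bigr)\vee(p^*\wedge\Log l)\Bigr)^{1/r}
\end{equation*}
(together with its Gaussian analogue, obtained in the paper from the case $r=2$ via Lemma~\ref{lem:WeibGauss}). You state this estimate but do not prove it; in the paper it is exactly the content of Lemmas~\ref{lem:ellqq} and~\ref{lem:exellq}, and it is the technical heart of Theorem~\ref{thm:submatricespsir}, not a piece of bookkeeping.

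Your sketch of that estimate is not a proof as it stands. For the upper bound, summing the typical values $(\Log(n/i))^{p^*/r}$ is not sufficient: you must control the moments $\Ex(X_i^*)^{p^*}$, which for large $p^*$ are not governed by the means alone (already $\Ex(X_1^*)^{p^*}\ge\Ex|X_1|^{p^*}=\Gamma(p^*/r+1)$, which is where the competing $p^*$ term comes from), and in the regime $p^*\gtrsim\Log l$ the route through $\bigl(\Ex\sum_{i\le l}(X_i^*)^{p^*}\bigr)^{1/p^*}$ is lossy, so one has to compare the $\ell_{p^*}$-norm with $\max_{i\le n}|X_i|$ instead, as the paper does; the paper handles the moment computation by a separate result on order statistics in Lemma~\ref{lem:ellqq}. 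For the lower bound, which you need for the ``moreover'' part, ``a mild concentration argument around the means'' does not obviously suffice in the regime $2\le p^*\le\Log l$: Jensen's inequality goes the wrong way there, and the paper has to invoke Lata{\l}a's comparison of moments and expectations of suprema of variables with log-concave tails together with the Gluskin--Kwapie\'n moment bound in the proof of Lemma~\ref{lem:exellq}. (An elementary substitute exists, e.g.\ using $\bigl(\sum_{i\le l}(X_i^*)^{p^*}\bigr)^{1/p^*}\ge (l')^{1/p^*}X^*_{l'}$ with $l'\approx l e^{-p^*}$ and a binomial lower bound for $\Pr\bigl(X^*_{l'}\ge(\ln(n/l'))^{1/r}\bigr)$, but some such argument has to be written down.) So the skeleton of your proof coincides with the paper's, but the order-statistics lemma that actually carries the theorem is missing, and the plan you indicate for it would not go through without substantial additional work.
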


Theorem \ref{thm:submatricespsir} applied with $r=1$, and \eqref{eq:comp-log-concave-exponential} yield the following corollary.

\begin{cor}
Let $(Y_{i,j})_{i\le m, j\le n}$ be isotropic log-concave unconditional matrix. Then
 for any $1\leq k\leq m$, $1\leq l\leq n$ and $p,q\in [1,\infty]$,
 \begin{align*}
\Ex \sup_{I,J} \bigl\| (Y_{i,j})_{i\in I, j\in J} \bigr\|_{\ell_p^J\to \ell_q^I}
\lesssim  \, 
& l^{1/p^*}\Bigl(\Log\Bigl(\frac{n}l\Bigr) \vee (p^*\wedge \Log l) \Bigr)
\\
& + k^{(1/q-1/2)\vee 0} l^{1/p^*}\Bigl(\Log\Bigl(\frac{n}l\Bigr) \vee (p^*\wedge\Log l\Bigr)^{1/2}
\\
&+ k^{1/q}\Bigl(\Log\Bigl(\frac{m}k\Bigr) \vee (q\wedge \Log k) \Bigr) 
\\
&+ l^{(1/p^*-1/2)\vee 0}k^{1/q}\Bigl(\Log\Bigl(\frac{m}k\Bigr) \vee (q\wedge \Log k) \Bigr)^{1/2},
\end{align*}
where the supremum runs over all sets $I\subset[m]$, $J\subset[n]$ such that $|I|=k$ and $|J|=l$.
Moreover, the above bound may be reversed if $(Y_{i,j})_{i\le m, j\le n}$ are  iid symmetric exponential r.v.'s.
\end{cor}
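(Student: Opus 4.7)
The plan is to reduce to the Weibull (exponential) case and then quote Theorem~\ref{thm:submatricespsir}. Concretely, observe that
\[
\sup_{I,J}\bigl\|(Y_{i,j})_{i\in I,j\in J}\bigr\|_{\ell_p^J\to\ell_q^I}
=\sup_{u\in U}\sum_{i\le m,j\le n}u_{i,j}Y_{i,j},
\]
where $U\subset\er^{mn}$ is the (symmetric, bounded, nonempty) set consisting of all tensors $s\otimes t$ supported on a coordinate rectangle $I\times J$ with $|I|=k$, $|J|=l$, $s\in B_{q^*}^I$, $t\in B_p^J$, extended by zero outside $I\times J$. Since $Y$ is an $mn$-dimensional isotropic unconditional log-concave random vector, the comparison inequality \eqref{eq:comp-log-concave-exponential} (as noted in the text, after the standard reduction to symmetric index sets via Lemma~\ref{lem:Weibull-to-unclogconc}) yields
\[
\Ex\sup_{u\in U}\sum_{i,j}u_{i,j}Y_{i,j}
\lesssim
\Ex\sup_{u\in U}\sum_{i,j}u_{i,j}E_{i,j}
=\Ex\sup_{I,J}\bigl\|(E_{i,j})_{i\in I,j\in J}\bigr\|_{\ell_p^J\to\ell_q^I},
\]
where $(E_{i,j})$ are iid symmetric exponentials.

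It then remains to apply Theorem~\ref{thm:submatricespsir} to the matrix $(E_{i,j})$. Symmetric exponentials are centered $\psi_1$ random variables with universal constant $\sigma\sim 1$, so the theorem applies with $r=1$. Substituting $r=1$ collapses the exponents $(1/q-1/r)\vee 0$ and $(1/p^*-1/r)\vee 0$ to $0$ (since $p^*,q\ge 1$), turns the $1/r$-powers of the logarithmic factors into first powers, while the $1/2$-powers of the log factors and the $(1/q-1/2)\vee 0$, $(1/p^*-1/2)\vee 0$ exponents are left untouched. These are precisely the four summands appearing in the statement, which proves the claimed upper bound.

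The reverse bound in the case when the $Y_{i,j}$ are iid symmetric exponentials is immediate: symmetric exponentials are exactly the Weibull random variables with parameter $r=1$, so the second (lower bound) assertion of Theorem~\ref{thm:submatricespsir} applies directly, and the resulting lower estimate coincides (for $r=1$) with the right-hand side of the corollary. The only step requiring any care is the comparison step, where one must verify that the set $U$ above is symmetric and bounded so that \eqref{eq:comp-log-concave-exponential} can indeed be invoked; this is straightforward because $B_{q^*}^I$ and $B_p^J$ are both symmetric convex bodies. No new probabilistic ingredient beyond Theorem~\ref{thm:submatricespsir} and \eqref{eq:comp-log-concave-exponential} is needed.
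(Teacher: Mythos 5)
Your proposal is correct and follows essentially the same route as the paper: the paper also obtains this corollary by combining the comparison inequality \eqref{eq:comp-log-concave-exponential} (applied to the bounded set of rank-one tensors $s\otimes t$ supported on $k\times l$ rectangles, which reduces the unconditional log-concave matrix to iid symmetric exponentials) with Theorem~\ref{thm:submatricespsir} specialized to $r=1$, the lower bound for iid exponentials coming from the ``moreover'' part of that theorem. Your substitution of $r=1$ into the four terms and the remark that the comparison must precede the use of Theorem~\ref{thm:submatricespsir} (since the entries of $Y$ need not be independent) are exactly the intended details.
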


Applying Theorem \ref{thm:submatricespsir} with $k=m$ and $l=n$ we derive the following bound which extends \eqref{eq:lplqgauss}
to the case of Weibull matrices (this also follows from Theorem \ref{thm:aibjWeibull} from Section \ref{sect:tensorweights} 
applied with $a_i=b_j=1$). 

\begin{cor}
\label{cor:rectWeibull1}
Let $(X_{i,j})_{i\leq m,i\leq n}$ be iid symmetric Weibull r.v.'s with  parameter $r\in [1,2]$. Then for every $1\leq p,q\leq \infty$,
\begin{align*}
&\Ex\bigl\|(X_{i,j})_{i\leq m,j\leq n}\bigr\|_{\ell_p^n\rightarrow\ell_q^m}
\\
&\sim
\begin{cases}
m^{1/q-1/2}n^{1/p^*}+n^{1/p^*-1/2}m^{1/q},&p^*,q\leq 2,
\\
 (p^*\wedge \Log n)^{1/r}n^{1/p^*}m^{ (1/q-1/r)\vee 0}
+ \sqrt{p^*\wedge \Log n}\: n^{1/p^*}m^{1/q-1/2}+m^{1/q},&q\leq 2\leq p^*,
\\
 n^{1/p^*}+ (q\wedge \Log m)^{1/r}m^{1/q}n^{(1/p^*-1/r)\vee 0}
+ \sqrt{q\wedge \Log m}\: m^{1/q}n^{1/p^*-1/2},&p^*\leq 2\leq q,
\\
 (p^*\wedge \Log n)^{1/r}n^{1/p^*}+ (q\wedge\Log m)^{1/r}m^{1/q},&2\leq p^*,q
\end{cases}
\\ &\sim
 (p^*\wedge \Log n)^{1/r} m^{(1/q-1/r)\vee 0}n^{1/p^*} +\sqrt{p^*\wedge \Log n}\: m^{(1/q-1/2)\vee 0} n^{1/p^*}
 \\ &\hspace{1,5cm}  
+ (q\wedge \Log m)^{1/r} n^{(1/p^*-1/r)\vee 0}m^{1/q} +\sqrt{q\wedge \Log m}\: n^{(1/p^*-1/2)\vee 0} m^{1/q}.
\end{align*}
In particular, if $n=m$ then
\[
\Ex\bigl\|(X_{i,j})_{i,j=1}^n\bigr\|_{\ell_p^n\rightarrow\ell_q^n}\sim
\begin{cases}
n^{1/q+1/p^*-1/2},&p^*,q\leq 2,
\\
 (p^*\wedge q\wedge \Log n)^{1/r} n^{1/(p^*\wedge q)},&  p^*\vee q\geq 2.
\end{cases}
\]
\end{cor}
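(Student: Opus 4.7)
The plan is to apply Theorem~\ref{thm:submatricespsir} with $k=m$ and $l=n$. In this regime the suprema over $I\subset[m]$, $J\subset[n]$ degenerate to the single matrix $(X_{i,j})$, while $\Log(n/l)=\Log(m/k)=1$, so the bracketed factors in that theorem collapse to $p^*\wedge\Log n$ and $q\wedge\Log m$. Since iid symmetric Weibull variables with parameter $r$ are $\psi_r$ with a universal constant, the upper bound of Theorem~\ref{thm:submatricespsir} directly yields the second (combined) $\sim$-chain of the corollary, while the final clause of the same theorem supplies the matching lower bound.

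To deduce the case-by-case formula from the combined one I would use two elementary reductions. First, whenever $p^*\le 2$ both $(p^*\wedge\Log n)^{1/r}$ and $(p^*\wedge\Log n)^{1/2}$ lie in $[1,2^{1/r}]$ and may be absorbed into universal constants, and analogously on the $q$-side. Second, whenever $p^*\ge 2$ we have $1/p^*\le 1/2\le 1/r$ (using $r\in[1,2]$), so $(1/p^*-1/2)\vee 0=(1/p^*-1/r)\vee 0=0$, and the two $p^*$-summands merge into $(p^*\wedge\Log n)^{1/r}n^{1/p^*}$; symmetrically for $q\ge 2$. Applying these reductions to each of the four sign configurations of $p^*-2$ and $q-2$ reproduces the corresponding line of the case-by-case formula; in the subcase $p^*,q\le 2$ one additionally observes that $1/r\ge 1/2$ gives $(1/q-1/r)\vee 0\le 1/q-1/2$ and similarly for $p^*$, so the $r$-dependent powers of $m,n$ are dominated by the $1/2$-dependent ones.

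For the specialization $n=m$, the case $p^*,q\le 2$ is immediate since both surviving summands equal $n^{1/q+1/p^*-1/2}$. In the case $p^*\vee q\ge 2$, setting $u=p^*\wedge q$, I would compare exponents termwise to show that the target $(u\wedge\Log n)^{1/r}n^{1/u}$ dominates all four combined-formula summands and is itself realized by one of them. The main bookkeeping step is the mixed subcase $p^*\le 2\le q$ (its symmetric counterpart being analogous), where one must verify that the $q$-side terms satisfy $(q\wedge\Log n)^{1/r}n^{(1/p^*-1/r)\vee 0+1/q}\lesssim n^{1/p^*}$. This reduces, using $p^*\le 2\le q$ and $r\in[1,2]$, to an elementary estimate: splitting into $q\le\Log n$ (handled by $\log(1+x)\le x$ applied with $x=(q-p^*)/p^*$) and $q>\Log n$ (handled by $n^{1/\Log n}\le e$) yields a universal bound.
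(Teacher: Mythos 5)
Your proposal follows exactly the paper's route: the corollary is obtained by applying Theorem~\ref{thm:submatricespsir} with $k=m$ and $l=n$ (Weibull variables with parameter $r$ are centered $\psi_r$ with constant $1$, and the theorem's final clause supplies the matching lower bound), after which the case-by-case and square-case formulas follow by the elementary comparisons you indicate. One labeling slip worth fixing: the two summands $(p^*\wedge\Log n)^{1/r}m^{(1/q-1/r)\vee 0}n^{1/p^*}$ and $(p^*\wedge\Log n)^{1/2}m^{(1/q-1/2)\vee 0}n^{1/p^*}$ merge into $(p^*\wedge\Log n)^{1/r}n^{1/p^*}$ when $q\ge 2$ (which is what kills the $m$-exponents), not when $p^*\ge 2$ as you wrote --- in the regime $q\le 2\le p^*$ both summands must be retained, exactly as in the second line of the corollary --- but this does not affect the validity of the argument.
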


Lemma~\ref{lem:sup-bp-lq-norm-Weibull} below and the bound 
$\|X_{i,j}\|_\rho = \bigl(\Gamma(\rho/r+1)\bigr)^{1/\rho} \sim (\rho/r)^{1/r}\sim \rho^{1/r}$ imply that the estimates in 
Corollary~\ref{cor:rectWeibull1} are equivalent to
\begin{align}
\label{eq:rectiid}
\Ex\bigl\|(X_{i,j})_{i\leq m, j\leq n}\bigr\|_{\ell_p^n\rightarrow\ell_q^m}\sim
m^{1/q}\sup_{t\in B_p^n}\Bigl\|\sum_{j=1}^nt_jX_{1,j}\Bigr\|_{ q\wedge \Log m}
+n^{1/p^*}\sup_{s\in B_{q^*}^m}\Bigl\|\sum_{i=1}^{m} s_iX_{i,1}\Bigr\|_{ p^*\wedge \Log n}
\end{align}
and, in the square case, to
\begin{equation}
\label{eq:squareiid}
\Ex\bigl\|(X_{i,j})_{i,j=1}^n\bigr\|_{\ell_p^n\rightarrow\ell_q^n}\sim
\begin{cases}
n^{1/q+1/p^*-1/2}\|X_{1,1}\|_2,&p^*,q\leq 2,
\\
n^{1/(p^*\wedge q)}\|X_{1,1}\|_{p^*\wedge q\wedge \Log n},&  p^*\vee q\geq 2.
\end{cases}
\end{equation}
In the upcoming work \cite{LSiidmatr} we show that \eqref{eq:rectiid} and \eqref{eq:squareiid} hold for  a wider class of  
centered iid random matrices satisfying the following mild regularity assumption: there exists $\alpha \ge 1$ such that 
for every $\rho\ge 1$,
\[
\|X_{i,j}\|_{2\rho}\le \alpha\|X_{i,j}\|_{\rho};
\]
this class contains, e.g.,  all log-concave random  matrices with iid entries and iid Weibull random variables 
with shape parameter $r\in(0,\infty]$.

The rest of this paper is organized as follows.
Section~\ref{sect:proofs} contains the proof of Theorem~\ref{thm:chevetWeibull}, Corollary~\ref{cor:psi-r-Chevet}, 
and inequality~\eqref{eq:comp-log-concave-exponential}.
In Section~\ref{sect:tensorweights} we formulate and prove bounds for norms of random matrices in the tensor structured case.
Finally, Section~\ref{sect:submatrices} contains the proof of Theorem~\ref{thm:submatricespsir}.


\section{ Proofs of Chevet-type bounds}	
\label{sect:proofs}

In this section we show how to derive Chevet-type bounds. Then we move to the proofs of  Corollary~\ref{cor:psi-r-Chevet} and inequality~ \eqref{eq:comp-log-concave-exponential}.

\begin{proof}[Proof of Theorem \ref{thm:chevetWeibull}]
The second estimate follows by Chevet's inequality. The proof of the first upper bound is a 
modification of the proof of Theorem~3.1 from \cite{ALLPT}.  

Let us briefly recall the notation from \cite{ALLPT}. 
 For a metric space  $(U, \dist)$  and $\rho>0$ let 
\[
\gamma_\rho(U, \dist) 
:= \inf_{(U_l)_{l=0}^\infty} \sup_{u\in U} \sum_{l=0}^\infty 2^{l/\rho} \dist(u,U_l), 
 \]
where the infimum is taken over all admissible sequences of sets, i.e., all sequences 
$(U_l)_{l=0}^\infty$ of subsets of $U$, such that $|U_0|=1$, and $|U_l|\le 2^{2^l}$ for $l\ge 1$. 
Let $ \dist_{\rho}$ be the $\ell_{\rho}$-metric in the appropriate dimension. 
Since $r\in [1,2]$, by the result of Talagrand \cite{Tal} (one may also use the more general 
\cite[Theorem~2.4]{LaT} to see more explicitely that two-sided bounds hold with constants 
independent of parameter $r$) for every nonempty $U\subset \er^k$,
\begin{equation}
\label{eq:gamma-est-weibull}
\Ex\sup_{u\in U}\sum_{i\le k} u_ig_i \sim \gamma_2(U,  \dist_2) \quad \text{ and } \quad 
\Ex\sup_{u\in U} \sum_{i=1}^k u_iX_i \sim \gamma_r(U,  \dist_{r^*})+\gamma_2(U, \dist_2).
\end{equation}

For nonempty sets $S\subset \er^m$ and $T\subset \er^n$ let 
\[
S\otimes T=\{ s\otimes t \colon s\in S, t\in T \},
\]
where $s\otimes t\coloneqq (s_it_j)_{i\le m, j\le n}$ belongs to the space of real $m\times n$ matrices, which we identify with $\er^{mn}$. 	
Now we will prove that 
\begin{equation}
\label{eq:gamma1-tensor-sep}
\gamma_{r}(S\otimes T, \dist_{r^*}) \sim \sup_{t\in T}\|t\|_{r^*}\gamma_r(S, \dist_{r^*}) 
+ \sup_{s\in S}\|s\|_{r^*}\gamma_r(T, \dist_{r^*}).
\end{equation}
Let $S_l\subset S$ and $T_l\subset T$, $l=0,1,\ldots$ be admissible sequences of sets.  
Set $T_{-1}:=T_0$, $S_{-1}:=S_0$
and define $U_l\coloneqq S_{l-1}\otimes T_{l-1}$. 
Then $(U_l)_{l\ge 0}$ is an admissible sequence of subsets of $S\otimes T$.
	
Note that  for all $s',s''\in S$, and $t',t''\in T$ we have
\begin{align*}
\dist_{r^*} (s'\otimes t',s''\otimes t'' ) 
& = \|s'\otimes t'-s''\otimes t''\|_{r^*} 
 \leq \|s'\otimes(t'-t'')\|_{r^*} + \|(s'-s'')\otimes t''\|_{r^*}
\\ & = \|s'\|_{r^*} \|t'-t''\|_{r^*} + \|t''\|_{r^*}\|s'-s''\|_{r^*} 
\\ & \leq \sup_{s\in S}\|s\|_{r^*}\dist_{r^*}(t',t'') + \sup_{t\in T}\|t\|_{r^*}\dist_{r^*}(s',s'').
\end{align*}
Therefore
\begin{multline*}
\gamma_{r}(S\otimes T,  \dist_{r^*}) 
 \le \sup_{s\in S, t\in T} \sum_{l=0}^\infty 2^{l/r}\dist_{r^*}(s\otimes t, U_l) 
\\
 \le 
\sup_{s\in S}\|s\|_{r^*}\sup_{t\in T} \sum_{l=0}^\infty 2^{l/r} \dist_{r^*}(t, T_{l-1}) 
+\sup_{t\in T}\|t\|_{r^*}\sup_{s\in S} \sum_{l=0}^\infty 2^{l/r} \dist_{r^*}(s, S_{l-1}).
\end{multline*}
Taking the infimum over all admissible sequences $(S_l)_{l\ge 0}$ and $(T_l)_{l\ge 0}$ we get the upper bound~\eqref{eq:gamma1-tensor-sep}.

To establish the lower bound in \eqref{eq:gamma1-tensor-sep} it is enough to observe that
\begin{align*}
\gamma_{r}(S\otimes T, \dist_{r^*})
&\geq
\max\Bigl\{\sup_{t\in T}\gamma_{r}\bigl(S\otimes \{t\}, \dist_{r^*}\bigr),\ 
\sup_{s\in S}\gamma_{r}\bigl(\{s\}\otimes T, \dist_{r^*}\bigr)\Bigr\}
\\
&=\max\Bigl\{\sup_{t\in T}\|t\|_{r^*}\gamma_{r}(S, \dist_{r^*}),\ 
\sup_{s\in S}\|s\|_{r^*}\gamma_{r}(T, \dist_{r^*})\Bigr\}.
\end{align*}

Bounds   \eqref{eq:gamma-est-weibull} and \eqref{eq:gamma1-tensor-sep} imply
\begin{multline}
\label{eq:estchev1}
\Ex\sup_{s\in S,t\in T}\sum_{i\le m, j\le n} X_{i,j}s_it_j
 \sim
\gamma_2(S\otimes T, \dist_2) + \gamma_{r}(S\otimes T, \dist_{r^*}) 
\\ 
 \sim
\Ex\sup_{s\in S,t\in T}\sum_{i\le m, j\le n} g_{i,j}s_it_j + 
\sup_{t\in T}\|t\|_{r^*}\gamma_{r}(S, \dist_{r^*}) 
+ \sup_{s\in S}\|s\|_{r^*}\gamma_{r}(T, \dist_{r^*}).
\end{multline}
Moreover, Chevet's inequality and \eqref{eq:gamma-est-weibull} yield
\begin{align}
\label{eq:estchev2}
\Ex\sup_{s\in S,t\in T}\sum_{i\le m, j\le n} g_{i,j}s_it_j
&\gtrsim \sup_{t\in T}\|t\|_{2}\gamma_{2}(S, \dist_2) 
+ \sup_{s\in S}\|s\|_{2}\gamma_{2}(T, \dist_2)
\\
\notag
&\geq \sup_{t\in T}\|t\|_{r^*}\gamma_{2}(S, \dist_2) 
+ \sup_{s\in S}\|s\|_{r^*}\gamma_{2}(T, \dist_2).
\end{align}
The first  asserted inequality follows by applying \eqref{eq:estchev1}, \eqref{eq:estchev2} and \eqref{eq:gamma-est-weibull}.
\end{proof}

Corollary~\ref{cor:psi-r-Chevet} immediately follows by a  symmetrization, Theorem~\ref{thm:chevetWeibull} and the following standard lemma.
 
 \begin{lem} 
 \label{lem:Weibull-to-psi_r}
Let $X_{i,j}$'s be iid symmetric Weibull r.v.'s with parameter $r\in [1,2]$, and
$Y_{i,j}$'s be independent symmetric $\psi_r$ random variables  with constant $\sigma$.
Then for every  bounded nonempty sets $S\subset\er^m$ and $T\subset \er^n$ we have
\begin{align*}
\Ex\sup_{s\in S,t\in T}\sum_{i\le m, j\le n} Y_{i,j}s_it_j
\le\ & 2 \sigma\Ex\sup_{s\in S,t\in T}\sum_{i\le m, j\le n} X_{i,j}s_it_j.
\end{align*}
 \end{lem}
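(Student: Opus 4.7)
The plan is to combine symmetrization by independent Rademacher signs with a quantile coupling of $|Y_{i,j}|$ and $|X_{i,j}|$, apply Kahane's contraction principle, and then convert an auxiliary pure-Rademacher supremum back into a Weibull one via Jensen. Since both $Y_{i,j}$ and $X_{i,j}$ are symmetric, inserting iid Rademachers $\varepsilon_{i,j}$ independent of everything gives
\[
\Ex\sup_{s\in S,t\in T}\sum_{i,j} Y_{i,j}s_it_j=\Ex\sup_{s\in S,t\in T}\sum_{i,j}\varepsilon_{i,j}|Y_{i,j}|s_it_j,
\]
and similarly for $X_{i,j}$.

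Next I would couple the magnitudes on a common probability space using iid uniforms $U_{i,j}$ on $[0,1]$ (independent of the signs): set $|X_{i,j}|:=(\ln(1/(1-U_{i,j})))^{1/r}$, an iid family distributed as $|X|$, and $|Y_{i,j}|:=F_{|Y_{i,j}|}^{-1}(U_{i,j})$. The $\psi_r$ tail forces $F_{|Y_{i,j}|}^{-1}(u)\le\sigma(\ln(2/(1-u)))^{1/r}$ for $u\in[0,1)$, and the sub-additivity $(a+b)^{1/r}\le a^{1/r}+b^{1/r}$, valid precisely because $r\ge 1$, yields the almost-sure pointwise estimate
\[
|Y_{i,j}|\le\sigma(\ln 2)^{1/r}+\sigma|X_{i,j}|.
\]
Conditioning on $(U_{i,j})$ and invoking Kahane's contraction principle (applied to the convex, sign-invariant functional $(c_{i,j})\mapsto\Ex_\varepsilon\sup_{s,t}\sum\varepsilon_{i,j}c_{i,j}s_it_j$), together with the elementary bound $\sup(f+g)\le\sup f+\sup g$, then gives
\[
\Ex_\varepsilon\sup_{s,t}\sum\varepsilon_{i,j}|Y_{i,j}|s_it_j\le\sigma(\ln 2)^{1/r}\Ex_\varepsilon\sup_{s,t}\sum\varepsilon_{i,j}s_it_j+\sigma\Ex_\varepsilon\sup_{s,t}\sum\varepsilon_{i,j}|X_{i,j}|s_it_j.
\]

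To absorb the pure Rademacher term, Jensen's inequality applied to the convex map $c\mapsto\sup_{s,t}\sum\varepsilon_{i,j}c_{i,j}s_it_j$, together with $\Ex|X_{i,j}|=\Gamma(1+1/r)$, yields $\Ex_\varepsilon\sup\sum\varepsilon_{i,j}s_it_j\le\Gamma(1+1/r)^{-1}\Ex\sup\sum X_{i,j}s_it_j$. Taking all expectations,
\[
\Ex\sup_{s,t}\sum Y_{i,j}s_it_j\le\sigma\Bigl(1+\frac{(\ln 2)^{1/r}}{\Gamma(1+1/r)}\Bigr)\Ex\sup_{s,t}\sum X_{i,j}s_it_j,
\]
and the elementary numerical check $(\ln 2)^{1/r}\le\Gamma(1+1/r)$ for $r\in[1,2]$ bounds the constant by $2\sigma$. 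The main subtlety is that a direct pointwise stochastic dominance $|Y_{i,j}|\le 2\sigma|X_{i,j}|$ fails at small quantiles, so splitting $|Y_{i,j}|$ into a bounded Rademacher contribution and a Weibull contribution, exploiting the sub-additivity that requires $r\ge 1$, is precisely what lets one still deduce the lemma with the claimed constant.
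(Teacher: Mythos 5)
Your proof is correct, and it handles the one nontrivial point (the factor $2$ in the $\psi_r$ tail bound) by a genuinely different device than the paper. The paper's proof keeps the same skeleton --- symmetrization, a coupling based on tail comparison, the contraction principle, and Jensen --- but instead of a quantile coupling and the additive split $|Y_{i,j}|\le\sigma(\ln 2)^{1/r}+\sigma|X_{i,j}|$, it multiplies $Y_{i,j}$ by independent Bernoulli$(1/2)$ variables $\delta_{i,j}$, so that $\Pr(|\delta_{i,j}Y_{i,j}|\ge t)\le\Pr(\sigma|X_{i,j}|\ge t)$ exactly; it then couples so that $|\delta_{i,j}Y_{i,j}|\le\sigma|X_{i,j}|$ a.s., applies contraction, and removes the $\delta_{i,j}$ by Jensen, which is where the factor $2=1/\Ex\delta_{i,j}$ appears. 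Your route instead needs $r\ge 1$ (subadditivity of $x\mapsto x^{1/r}$), the explicit Weibull quantile, the identity $\Ex|X_{i,j}|=\Gamma(1+1/r)$, and the numerical check $(\ln 2)^{1/r}\le\Gamma(1+1/r)$ on $[1,2]$ --- which does hold, since $(\ln 2)^{1/r}\le\sqrt{\ln 2}\approx 0.833$ while $\Gamma(1+1/r)\ge\Gamma(3/2)\approx 0.886$ there --- and in exchange gives the marginally better constant $1+(\ln 2)^{1/r}/\Gamma(1+1/r)<2$. The paper's Bernoulli-thinning argument uses nothing about the Weibull law beyond the tail domination $\Pr(|Y_{i,j}|\ge t)\le 2\Pr(\sigma|X_{i,j}|\ge t)$, so it is more robust (it works for any symmetric dominating variables and any $r>0$) and avoids numerical verification. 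One presentational point: state the contraction step for the convex (not norm) functional $z\mapsto\sup_{s\in S,t\in T}\sum_{i,j}z_{i,j}s_it_j$, i.e., first write $|Y_{i,j}|=\alpha_{i,j}\bigl(\sigma(\ln 2)^{1/r}+\sigma|X_{i,j}|\bigr)$ with $\alpha_{i,j}\in[0,1]$ and compare with the dominating coefficients, and only then split the supremum into the constant-matrix and Weibull parts; as sketched this is exactly how the paper also invokes contraction, so the argument goes through.
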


\begin{proof}
The $\psi_r$ assumption gives $\Pr(|Y_{i,j}|\geq t)\leq  2\Pr(|\sigma X_{i,j}|\geq t)$.
Let $(\delta_{i,j})_{i\le m,j\le n}$ be iid r.v.'s independent of all the others,
such that $\Pr(\delta_i=1)= 1/2=1-\Pr(\delta_i=0)$.
Then $\Pr(|\delta_{i,j}Y_{i,j}|\geq t)\leq \Pr(| \sigma X_{i,j}|\geq t)$ for every $t\geq 0$, 
so we may find such a representation of $(X_{i,j}, Y_{i,j},\delta_{i,j})_{i\le m, j\le n}$, 
that  $\sigma |X_{i,j}|\geq |\delta_{i,j}Y_{i,j}|$ a.s.
Let $(\ve_{i,j})_{i\leq m, j\leq n}$ be a matrix with iid symmetric $\pm 1$ entries (Rademachers) 
independent of all the others. 
Then  the contraction principle and Jensen's inequality imply
\begin{align*}
\sigma \Ex \sup_{s\in S, t\in T} \sum_{i,j}X_{i,j}s_it_j
& =  
\Ex \sup_{s\in S, t\in T} \sum_{i,j}\ve_{i,j}|\sigma X_{i,j}|s_it_j 
 \geq
\Ex \sup_{s\in S, t\in T} \sum_{i,j}\ve_{i,j}|\delta_{i,j}Y_{i,j}|s_it_j
\\
& = \Ex  \sup_{s\in S, t\in T} \sum_{i,j}\delta_{i,j}Y_{i,j}s_it_j
\geq  \Ex \sup_{s\in S, t\in T} \sum_{i,j}Y_{i,j} \Ex\delta_{i,j}s_it_j
\\ &
= \frac{1}{2} \Ex \sup_{s\in S, t\in T} \sum_{i,j}Y_{i,j} s_it_j. \qedhere
\end{align*}
\end{proof}

\begin{lem}
\label{lem:Weibull-to-unclogconc}
Formula \eqref{eq:comp-log-concave-exponential} holds for every
bounded nonempty set $U\subset \er^k$ and every $k$-dimensional unconditional  isotropic log-concave random vector $Y$.
\end{lem}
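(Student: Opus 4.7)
The plan is to reduce to the case of symmetric index sets, where \cite[Theorem~2]{LaWTD} (cited in the excerpt just before the lemma) already gives \eqref{eq:comp-log-concave-exponential}. The reduction consists of two steps: a translation ensuring $0\in U$, and a symmetrization replacing $U$ by $V\coloneqq U\cup(-U)$.

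First, since both $Y$ and $E=(E_1,\dots,E_k)$ are centered, the expected supremum is translation-invariant in $U$: for any fixed $u_0\in U$,
\[
\Ex \sup_{u \in U}\sum_{i=1}^k u_i Y_i
= \Ex \sup_{v \in U-u_0}\sum_{i=1}^k v_i Y_i,
\]
and the analogous identity holds with $Y$ replaced by $E$. I may therefore assume $0\in U$. Set $V\coloneqq U\cup(-U)$, which is a symmetric bounded nonempty subset of $\er^k$ still containing $0$. Applying \cite[Theorem~2]{LaWTD} to the symmetric set $V$ gives
\[
\Ex \sup_{u\in V}\sum_{i=1}^k u_iY_i \,\lesssim\, \Ex \sup_{u\in V}\sum_{i=1}^k u_iE_i.
\]

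It remains to compare the suprema over $V$ with those over $U$. The inequality $\sup_{u\in U}\le \sup_{u\in V}$ is trivial. For the reverse direction on the $E$-side, I write
\[
\sup_{u\in V}\sum_i u_iE_i
= \max\Bigl\{\sup_{u\in U}\sum_i u_iE_i,\ \sup_{u\in U}\sum_i(-u_i)E_i\Bigr\},
\]
and, since $0\in U$ (hence also $0\in -U$), both suprema on the right are pointwise nonnegative, so $\max(A,B)\le A+B$. Taking expectations and using that $-E$ has the same distribution as $E$, so that the two summands have equal mean, I get $\Ex\sup_{u\in V}\sum_i u_iE_i \le 2\,\Ex\sup_{u\in U}\sum_i u_iE_i$. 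Chaining the three inequalities yields \eqref{eq:comp-log-concave-exponential}.

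I do not expect any substantive obstacle: the lemma is a routine symmetrization that peels away the symmetry hypothesis from \cite[Theorem~2]{LaWTD}. The only items to check are that $U-u_0$ and $U\cup(-U)$ remain bounded and nonempty (immediate), and that the unconditional isotropic log-concavity of $Y$ is untouched (it is, since $Y$ itself is never modified).
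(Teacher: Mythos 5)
Your proof is correct and follows essentially the same route as the paper: translate so that $0$ lies in the index set (using that $Y$ and $E$ are centered), apply \cite[Theorem~2]{LaWTD} to the symmetrized set, and then use the symmetry of $E$ together with pointwise nonnegativity of the two suprema to bound the symmetrized expectation by twice the original one. The paper phrases the symmetrization via $\sup_{u\in U-v}\bigl|\sum_i u_iE_i\bigr|$ and its positive parts, but this is the same estimate as your $\max(A,B)\le A+B$ argument, so there is no substantive difference.
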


\begin{proof}
\cite[Theorem~2]{LaWTD} states that for every norm $\|  \cdot\|$ on $\er^k$,
$\Ex\|Y\|\leq C\Ex\|E\|$, where $E=(E_1,\ldots,E_k)$. In other words, 
\eqref{eq:comp-log-concave-exponential} holds for bounded symmetric sets $U$.

Now, let $U$ be arbitrary. Take any point $v\in U$. Since
$\Ex \sum_{i=1}^{ k} v_iY_i=0$ we have
\begin{align*}
\Ex \sup_{u\in U}\sum_{i=1}^k u_iY_i =
\Ex \sup_{u\in U-v}\sum_{i=1}^{k} u_iY_i
\leq \Ex \sup_{u\in U-v}\Bigl|\sum_{i=1}^{k} u_iY_i\Bigr|
\leq C\Ex \sup_{u\in U-v}\Bigl|\sum_{i=1}^{k} u_iE_i\Bigr|,
\end{align*}
where the last inequality follows by \eqref{eq:comp-log-concave-exponential} applied
to the symmetric set $(U-v)\cup (v-U)$. On the other hand, the distribution of $E$ is symmetric, 
$0\in U-v$, and $\Ex \sum_{i=1}^{ k} v_iE_i=0$, so
\begin{align*}
\Ex \sup_{u\in U-v}\Bigl|\sum_{i=1}^{k} u_iE_i\Bigr|
&\leq \Ex \sup_{u\in U-v}\Bigl(\sum_{i=1}^{k} u_iE_i\Bigr)\vee 0
+ \Ex \sup_{u\in U-v}\Bigl(-\sum_{i=1}^{k} u_iE_i\Bigr)\vee 0
\\
&=2\Ex \sup_{u\in U-v}\Bigl(\sum_{i=1}^{k} u_iE_i\Bigr)\vee 0
= 2\Ex\sup_{u\in U-v}\sum_{i=1}^{k} u_iE_i
= 2\Ex\sup_{u\in U}\sum_{i=1}^{k} u_iE_i.\qedhere
\end{align*}

\end{proof}

\section{Matrices $(a_ib_jX_{ij})$} \label{sect:tensorweights}

In this section we shall consider matrices of the form $(a_ib_jX_{i,j})_{i \leq m, j\leq n}$. 
 Before presenting  our results we need to introduce some notation.
 
By $(c_i^*)_{i=1}^k$ we will denote the nonincreasing rearrangement of $(|c_i|)_{i=1}^k$. 
For $\rho\geq 1$ we set
\[
\varphi_\rho(t)
=\begin{cases}
\exp( 2-2t^{-\rho}),& t>0.
\\
0,& t=0,
\end{cases}
\]
and define
\[
\|(c_i)_{i\leq k}\|_{\varphi_\rho}:=\inf\Bigl\{t>0\colon \sum_{i=1}^k\varphi_\rho(|c_i|/t)\leq 1\Bigr\}.
\]
The function $\varphi_\rho$ is not convex on $\er_+$. However, it is increasing and convex
on $[0,1]$ and $\varphi_\rho(1)=1$. So we may find a convex function $\tilde{\varphi}_\rho$ on $[0,\infty)$
such that $\varphi_{\rho}=\tilde{\varphi}_{\rho}$ on $[0,1]$. Then clearly 
$\|\cdot\|_{\varphi_{\rho}}=\|\cdot\|_{\tilde{\varphi}_\rho}$. Thus $\|\cdot\|_{\varphi_{\rho}}$ is an Orlicz norm.

Let us first present the bound in the Gaussian case.
\begin{thm}
\label{thm:aibjgauss}
For every $1\leq p,q\leq \infty$ and deterministic sequences $(a_i)_{i\leq m}$, $(b_j)_{j\leq m}$,
\begin{align*}
&\Ex\bigl\|(a_ib_jg_{i,j})_{i\leq m,j\leq n}\bigr\|_{\ell_p^n\rightarrow\ell_q^m}
\\
& \qquad\sim
\begin{cases}
\|a\|_{\frac{2q^*}{q^*-2}}\|b\|_{p^*}+\|a\|_q\|b\|_{\frac{2p}{p-2}},&p^*,q< 2,
\\
\|a\|_{\frac{2q^*}{q^*-2}}\bigl(\|(b_j^*)_{j\leq e^{p^*}}\|_{\varphi_{2}}
+\sqrt{p^*}\|(b_j^*)_{j> e^{p^*}}\|_{p^*}\bigr)+\|a\|_q\|b\|_{\infty} ,&q< 2\leq p^*,
\\
\|a\|_{\infty}\|b\|_{p^*}+\bigl(\|(a_i^*)_{i\leq e^{q}}\|_{\varphi_{2}}
+\sqrt{q}\|(a_i^*)_{i> e^q}\|_{q}\bigr)\|b\|_{\frac{2p}{p-2}},&p^*< 2\leq q,
\\
\|a\|_{\infty}\bigl(\|(b_j^*)_{j\leq e^{p^*}}\|_{\varphi_{2}}+\sqrt{p^*}\|(b_j^*)_{j> e^{p^*}}\|_{p^*}\bigr)
&
\\
\quad+\bigl(\|(a_i^*)_{i\leq e^{q}}\|_{\varphi_{2}}+\sqrt{q}\|(a_i^*)_{i> e^q}\|_{q}\bigr)\|b\|_{\infty},
&2\leq p^*,q.
\end{cases}
\end{align*}
\end{thm}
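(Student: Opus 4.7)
My plan is to reduce the statement to the classical Chevet inequality \eqref{eq:Chevet} by absorbing the deterministic weights into the constraint sets, and then to evaluate each of the resulting factors in closed form. Setting $\widetilde S:=\{(a_iu_i)_{i\le m}\colon u\in B_{q^*}^m\}$ and $\widetilde T:=\{(b_ju_j)_{j\le n}\colon u\in B_p^n\}$, one has
\[
\bigl\|(a_ib_jg_{i,j})\bigr\|_{\ell_p^n\to\ell_q^m}
=\sup_{s\in\widetilde S,\,t\in\widetilde T}\sum_{i,j}g_{i,j}s_it_j,
\]
so \eqref{eq:Chevet} gives
\[
\Ex\bigl\|(a_ib_jg_{i,j})\bigr\|_{\ell_p^n\to\ell_q^m}
\sim A_1\,\Ex\bigl\|(g_jb_j)_{j\le n}\bigr\|_{p^*}
+A_2\,\Ex\bigl\|(g_ia_i)_{i\le m}\bigr\|_{q},
\]
where $A_1:=\sup_{u\in B_{q^*}^m}\|(a_iu_i)\|_2$ and $A_2:=\sup_{u\in B_p^n}\|(b_ju_j)\|_2$ are the norms of the diagonal maps $\ell_{q^*}^m\to\ell_2^m$ and $\ell_p^n\to\ell_2^n$; I also used the duality $\Ex\sup_{u\in B_p^n}\sum_jg_jb_ju_j=\Ex\|(g_jb_j)\|_{p^*}$ and its counterpart for the other factor.

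Next I would evaluate each factor in closed form. A direct H\"older computation, together with testing on a single coordinate, gives
\[
A_1=\begin{cases}\|a\|_{2q^*/(q^*-2)}&\text{if }q<2,\\ \|a\|_\infty&\text{if }q\ge 2,\end{cases}\qquad
A_2=\begin{cases}\|b\|_{2p/(p-2)}&\text{if }p^*<2,\\ \|b\|_\infty&\text{if }p^*\ge 2.\end{cases}
\]
For the Gaussian expectations the relevant input is the two-sided bound
\[
\Ex\bigl\|(g_jc_j)_{j\le N}\bigr\|_\rho
\sim\begin{cases}\|c\|_\rho&\text{if }\rho<2,\\ \|(c_j^*)_{j\le e^\rho}\|_{\varphi_2}+\sqrt{\rho}\,\|(c_j^*)_{j>e^\rho}\|_\rho&\text{if }\rho\ge 2,\end{cases}
\]
which is precisely the content of Remark~\ref{rem:estellpGauss} announced in the introduction. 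For $\rho<2$ both directions follow from $\Ex|g_1|^\rho\sim 1$ together with Jensen; for $\rho\ge 2$, the split at rank $e^\rho$ separates the ``extremal'' contribution of the top coordinates (controlled via the equivalence between $\Ex\max_{j\le k}|g_jc_j|$ and an $\|\cdot\|_{\varphi_2}$-Orlicz norm) from the ``moment'' contribution of the remaining ones (controlled by $\Ex|g|^\rho\sim(\sqrt{\rho})^\rho$ and Jensen).

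To conclude I would substitute these closed-form expressions into the Chevet product and verify that the four combinations of $q<2$ versus $q\ge 2$ and of $p^*<2$ versus $p^*\ge 2$ reproduce exactly the four cases of the statement term by term. The main technical obstacle is the regime $\rho\ge 2$ of the weighted Gaussian $\ell_\rho$-norm estimate: identifying the correct cut-off at rank $e^\rho$ and the appearance of the Orlicz norm $\|\cdot\|_{\varphi_2}$ is the one non-elementary ingredient, and establishing both directions requires combining a moment-type upper bound for the tail coordinates with a maxima-type lower bound for the top coordinates. Once that auxiliary lemma is in hand, the remainder of the proof is a routine case-by-case matching with the four displayed formulas.
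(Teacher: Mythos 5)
Your outer reduction is exactly the paper's: apply Chevet's inequality \eqref{eq:Chevet} to the weighted balls $S=\{(a_is_i)\colon s\in B_{q^*}^m\}$, $T=\{(b_jt_j)\colon t\in B_p^n\}$, compute the diagonal-operator norms $A_1,A_2$ by H\"older (this is Lemma~\ref{lem:multiplier}), and plug in a two-sided estimate for $\Ex\|(c_jg_j)\|_\rho$. The gap is in that last ingredient, which you defer but do not actually prove. First, a misattribution: Remark~\ref{rem:estellpGauss} only covers the unweighted case $c_j\equiv 1$; the weighted statement you need is Corollary~\ref{cor:ellrhogauss}, which the paper does not treat as known but derives from Proposition~\ref{prop:ellrhoweibull} (the Weibull case) together with Lemma~\ref{lem:WeibGauss}. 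More importantly, your sketch of this estimate for $\rho\ge 2$ does not yield the lower bound. The upper bound and the lower bound by the Orlicz term $\|(c_j^*)_{j\le e^\rho}\|_{\varphi_2}$ (via $\Ex\max$) are routine, as you say; but the remaining lower bound
\[
\Ex\bigl\|(c_jg_j)_{j>e^\rho}\bigr\|_\rho\ \gtrsim\ \sqrt{\rho}\,\bigl\|(c_j^*)_{j>e^\rho}\bigr\|_\rho
\]
cannot come from ``$\Ex|g|^\rho\sim(\sqrt{\rho})^\rho$ and Jensen'': Jensen gives $\Ex\|Z\|_\rho\le(\Ex\|Z\|_\rho^\rho)^{1/\rho}$, i.e.\ the inequality in the wrong direction. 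One needs a reverse Kahane--Khintchine-type inequality with a subtracted weak-moment term. In the paper this is \cite[Theorem~1]{LaLCT} (for variables with log-concave tails), which gives
$\Ex\|(c_jX_j)_{j>e^\rho}\|_\rho\ge C^{-1}(\Ex\|(c_jX_j)_{j>e^\rho}\|_\rho^\rho)^{1/\rho}-\sup_{t\in B_{\rho^*}}\|\sum_{j>e^\rho}t_jc_jX_j\|_\rho$, followed by the estimate $\sup_{t\in B_{\rho^*}}\|\sum t_jc_jX_j\|_\rho\lesssim\rho^{1/r}c^*_{\lceil e^\rho\rceil}\lesssim\|(c_j^*)_{j\le e^\rho}\|_{\varphi_r}$ and a case distinction on whether the Orlicz term already dominates (the choice $C_1=2C_2C_3$ in the proof of Proposition~\ref{prop:ellrhoweibull}).

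In the purely Gaussian setting you could replace this by Gaussian concentration, since the Lipschitz constant of $x\mapsto\|(c_jx_j)_{j>e^\rho}\|_\rho$ with respect to $\ell_2$ is $\max_{j>e^\rho}|c_j|=c^*_{\lceil e^\rho\rceil}$, giving $(\Ex\|\cdot\|_\rho^\rho)^{1/\rho}\le\Ex\|\cdot\|_\rho+C\sqrt{\rho}\,c^*_{\lceil e^\rho\rceil}$, and then absorb $\sqrt{\rho}\,c^*_{\lceil e^\rho\rceil}\lesssim\|(c_j^*)_{j\le e^\rho}\|_{\varphi_2}$ into the other term; but some argument of this kind must be supplied --- the tools you name (Jensen plus the max--Orlicz equivalence) do not close it. Once Corollary~\ref{cor:ellrhogauss} is in place, your case-by-case matching with the four displayed regimes is indeed the same routine step as in the paper.
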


Before we move to the Weibull case, let us 
see how Theorem~\ref{thm:aibjgauss} implies conjecture~\eqref{eq:conjAPSS} for the tensor structured Gaussian matrices.

\begin{cor}
\label{cor:conj-tensor-case}
Assume that there exists $a\in \er^m$ and $b\in\er^n$ such that $a_{ij}=a_ib_j$ for every $i\le m,j\le n$. 
Then conjecture~\eqref{eq:conjAPSS} holds.
\end{cor}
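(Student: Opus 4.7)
The plan is to apply Theorem~\ref{thm:aibjgauss} (which is a two-sided bound for $\Ex\|G_A\|_{\ell_p^n\to\ell_q^m}$ in the tensor case $a_{i,j}=a_ib_j$), to compute $D_1$, $D_2$, $D_3$ from~\eqref{eq:conjAPSS} explicitly for this tensor structure, and then to verify $D_1+D_2+D_3\sim_{p,q}\Ex\|G_A\|_{\ell_p^n\to\ell_q^m}$ case by case.

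First I compute the three pieces. Since $(a_i^2b_j^2)$ is rank-one, the operator norms defining $D_1$ and $D_2$ factorize (interpreting $\ell_r$ as a quasi-norm when $r<1$):
\[
D_1=\|a\|_q\|b\|_{\tilde p},\qquad D_2=\|a\|_{\tilde q^*}\|b\|_{p^*},
\]
with $\tilde p\coloneqq 2p/(p-2)$ if $p>2$ and $\tilde p\coloneqq\infty$ otherwise, and analogously $\tilde q^*\coloneqq 2q/(2-q)$ if $q<2$, $\tilde q^*\coloneqq\infty$ if $q\geq 2$. The auxiliary sequences entering $D_3$ split as $\|(a_{i,j})_i\|_{2q/(2-q)}=|b_j|\|a\|_{\tilde q^*}$ and $\|(a_{i,j})_j\|_{2p/(p-2)}=|a_i|\|b\|_{\tilde p}$, so $D_3$ factorizes in the regimes $p\leq q\leq 2$, $2\leq p\leq q$, $q<p$; only in the central region $p\leq 2\leq q$ does it keep the genuinely two-parameter form $\Ex\max_{i,j}|a_ib_jg_{i,j}|$.

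Using the standard equivalences $\Ex\|(c_jg_j)\|_r\sim\|(c_j^*)_{j\leq e^r}\|_{\varphi_2}+\sqrt{r}\|(c_j^*)_{j>e^r}\|_r\sim_r\|c\|_r+\max_j c_j^*\sqrt{\log(j+1)}$ for $r\geq 1$, together with $\|c\|_{\varphi_2}\sim\max_j c_j^*\sqrt{\log(j+1)}$, each of the four cases of Theorem~\ref{thm:aibjgauss} can be recognized as the Chevet expression
\[
\|a\|_{\tilde q^*}\Ex\|(b_jg_j)\|_{p^*}+\|b\|_{\tilde p}\Ex\|(a_ig_i)\|_q,
\]
which expands into two \emph{bulk} summands $\|a\|_{\tilde q^*}\|b\|_{p^*}+\|b\|_{\tilde p}\|a\|_q$ (these are $D_2+D_1$ up to factors $\sqrt{p^*},\sqrt{q}$ absorbed into $\sim_{p,q}$) and \emph{extremal} summands $\|a\|_{\tilde q^*}\max_j b_j^*\sqrt{\log(j+1)}$ (present only when $p^*\geq 2$) and $\|b\|_{\tilde p}\max_i a_i^*\sqrt{\log(i+1)}$ (present only when $q\geq 2$). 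In the non-central cases 1, 2, 3 of Theorem~\ref{thm:aibjgauss} at most one extremal summand appears, and it either coincides with $D_3$ (corresponding to the conjecture's cases (b) or (c)) or is $\lesssim\sqrt{p^*}D_2$, $\sqrt{q}D_1$ when $D_3=0$ (case (d)); the absorption uses the elementary bound $\max_k c_k^*\sqrt{\log(k+1)}\lesssim\sqrt{r}\|c\|_r$, which follows from $c_k^*\leq\|c\|_r k^{-1/r}$ optimized at $k\sim e^{r/2}$.

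The main obstacle is the central regime $p\leq 2\leq q$ (case 4 of Theorem~\ref{thm:aibjgauss}, i.e.\ case (a) of the conjecture), where both extremal summands are present and together must recover $D_3=\Ex\max_{i,j}|a_ib_jg_{i,j}|$. Via $\Ex\max_k|c_kg_k|\sim\|c\|_{\varphi_2}\sim\max_k c_k^*\sqrt{\log(k+1)}$ this reduces to the tensor Orlicz identity
\[
\|(a_ib_j)_{i,j}\|_{\varphi_2}\sim\|a\|_\infty\|b\|_{\varphi_2}+\|b\|_\infty\|a\|_{\varphi_2}.
\]
The $\gtrsim$ direction is immediate by restricting to a single row or column. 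For the $\lesssim$ direction, setting $s\coloneqq\sqrt{2}(\|a\|_\infty\|b\|_{\varphi_2}+\|b\|_\infty\|a\|_{\varphi_2})$, the two bounds $s\geq\sqrt{2}\|a\|_\infty\|b\|_{\varphi_2}$ and $s\geq\sqrt{2}\|b\|_\infty\|a\|_{\varphi_2}$ combine via $2\max(x,y)\geq x+y$ to give
\[
\frac{2s^2}{(a_ib_j)^2}\geq\frac{2\|a\|_{\varphi_2}^2}{a_i^2}+\frac{2\|b\|_{\varphi_2}^2}{b_j^2},
\]
so the defining double sum $\sum_{i,j}e^{-2s^2/(a_ib_j)^2}$ factorizes into a product of two one-dimensional sums each bounded by $e^{-2}$ by the very definition of $\|\cdot\|_{\varphi_2}$. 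Multiplying yields $\leq e^{-4}$, hence $\sum_{i,j}\varphi_2(a_ib_j/s)=e^2\sum_{i,j}e^{-2s^2/(a_ib_j)^2}\leq e^{-2}\leq 1$, i.e.\ $\|(a_ib_j)\|_{\varphi_2}\leq s$, which closes the argument.
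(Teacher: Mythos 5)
Your proposal is correct, but it takes a genuinely different route from the paper's. The paper never identifies $D_3$ in the tensor case: it dispatches the endpoints $p^*=\infty$ or $q=\infty$ by citing known results (GHLP, APSS), and for $p^*,q<\infty$ it proves only the crude two-sided bound $D_1+D_2\lesssim \Ex\|G_A\|_{\ell_p^n\to\ell_q^m}\lesssim \sqrt{q}\,D_1+\sqrt{p^*}\,D_2$, with the lower bound quoted from \cite[Proposition~5.1 and Corollary~5.2]{APSS} and the upper bound from Theorem~\ref{thm:aibjgauss} together with the same absorption $\|(c_i^*)_{i\le e^\rho}\|_{\varphi_2}+\sqrt{\rho}\|(c_i^*)_{i>e^\rho}\|_\rho\lesssim\sqrt{\rho}\|c\|_\rho$ that you use; since $D_3\ge 0$ and the lower bound of \eqref{eq:conjAPSS} is known in general, this already settles the conjecture with $p,q$-dependent constants. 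You instead prove the full matching $\Ex\|G_A\|_{\ell_p^n\to\ell_q^m}\sim_{p,q}D_1+D_2+D_3$ directly from Theorem~\ref{thm:aibjgauss}, which forces you to compute $D_3$ in the tensor case; the extra ingredient you supply, $\Ex\max_{i,j}|a_ib_jg_{i,j}|\sim\|(a_ib_j)_{i,j}\|_{\varphi_2}\sim\|a\|_\infty\|b\|_{\varphi_2}+\|b\|_\infty\|a\|_{\varphi_2}$, is proved correctly (the splitting of the exponent and factorization of the double sum is exactly right), and it is a nice observation not present in the paper. What your route buys: it is self-contained modulo Theorem~\ref{thm:aibjgauss} (no appeal to the APSS lower bound nor to the endpoint results), and it exhibits $D_3$ explicitly. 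What it costs: a longer case analysis that you only sketch. Two points should be made explicit to close it fully: at the endpoints $p^*=\infty$ or $q=\infty$ the factors $\sqrt{p^*},\sqrt{q}$ cannot literally be absorbed into $\sim_{p,q}$, but the corresponding terms $\sqrt{p^*}\|(b_j^*)_{j>e^{p^*}}\|_{p^*}$ and $\sqrt{q}\|(a_i^*)_{i>e^q}\|_q$ are then empty and $\|b\|_\infty\lesssim\|b\|_{\varphi_2}$, $\|a\|_\infty\lesssim\|a\|_{\varphi_2}$, so the matching survives; and in the non-central regimes where $D_3\neq 0$ but Theorem~\ref{thm:aibjgauss} produces no extremal summand (e.g.\ $p\le q\le 2$ with $p^*<2$), the reverse absorption $D_3\lesssim\sqrt{p^*}D_2$ (or $\sqrt{q}D_1$), i.e.\ your bound $\|c\|_{\varphi_2}\lesssim\sqrt{r}\|c\|_r$, is needed so that the two-sided claim holds in every case.
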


\begin{proof}
 If $p^*=\infty$ or $q=\infty$, then \eqref{eq:conjAPSS} is satisfied for an arbitrary matrix 
 $(a_{i,j})_{i,j}$ by  \cite[Remark 1.4]{GHLP},
\cite[Proposition 1.8 and Corollary 1.11]{APSS} 

In the case $p^*,q<\infty$ we shall show that
\begin{equation}
\label{eq:estprod}
D_1+D_2
\lesssim
\Ex\bigl\|(a_ib_jg_{i,j})_{i\leq m,j\leq n}\bigr\|_{\ell_p^n\rightarrow\ell_q^m}
\lesssim  \sqrt{q}D_1+ \sqrt{p^*}D_2.
\end{equation}

The lower bound 
follows by \cite[Proposition~5.1 and Corollary~5.2]{APSS}.

 To establish the upper bound let us first compute $D_1$ and $D_2$ in the case $a_{i,j}=a_ib_j$.
If $p>2$, then $2(p/2)^*=2p/(p-2)$, so  for every $p\in [1,\infty]$, 
\[
D_1=\sup_{t\in B_{p/2}^n} \biggl(\sum_{i=1}^m |a_i|^q \Bigl| \sum_{j=1}^n b_j^2t_j \Bigr|^{q/2} \biggr)^{1/q}
=\|a\|_q \sup_{t\in B_{p/2}^n}  \Bigl | \sum_{j=1}^n b_j^2t_j \Bigr |^{1/2}
= \|a\|_q \begin{cases}
\|b\|_{2p/(p-2)} & p^*<2, \\
\|b\|_\infty & p^*\ge 2,
\end{cases}
\]
and dually 
\[
D_2=
\|b\|_{p^*} \begin{cases}
\|a\|_{2q^*/(q^*-2)} & q<2, \\
\|a\|_\infty & q\ge 2.
\end{cases}
\]
Moreover, 
\begin{align*}
\|(b_j^*)_{j\leq e^{p^*}}\|_{\varphi_{2}}+\sqrt{p^*}\|(b_j^*)_{j> e^{p^*}} \|_{p^*}
&  \leq
 2\sqrt{p^*} b_1^*+ \sqrt{p^*}\|(b_j^*)_{j> e^{p^*}}  \|_{p^*}
 \leq 3\sqrt{p^*}\|(b_j^*)_{j}\|_{p^*},
\end{align*}
and similarly 
\[
\|(a_i^*)_{i\leq e^{q}}\|_{\varphi_{2}}+\sqrt{q}\|(a_i^*)_{i> e^q}\|_{q} \lesssim \sqrt{q}\|(a_i^*)_i\|_q.
\]
Hence, Theorem~\ref{thm:aibjgauss}  yields the upper bound in \eqref{eq:estprod}.
\end{proof}

In the Weibull case we get the following bound.

\begin{thm}
\label{thm:aibjWeibull}
Let $(X_{i,j})_{i\leq m,j\leq n}$ be iid symmetric Weibull r.v.'s with  parameter $r\in [1,2]$. Then for every $1\leq p,q\leq \infty$ and deterministic sequences $a=(a_i)_{i\leq m}$ and
$b=(b_j)_{j\leq n}$,
\begin{align*}
&\Ex\bigl\|(a_ib_jX_{i,j})_{i\leq m,j\leq n}\bigr\|_{\ell_p^n\rightarrow\ell_q^m}
\\
&\quad\sim
\begin{cases}
\|a\|_{\frac{2q^*}{q^*-2}}\|b\|_{p^*}+\|a\|_q\|b\|_{\frac{2p}{p-2}},&p^*,q< 2,
\\
\|a\|_{\frac{2q^*}{q^*-2}}\bigl(\|(b_j^*)_{j\leq e^{p^*}}\|_{\varphi_{2}}
+\sqrt{p^*}\|(b_j^*)_{j> e^{p^*}}\|_{p^*}\bigr)&
\\
\quad+\|a\|_{\frac{r^*q^*}{q^*-r^*}}\bigl(\|(b_j^*)_{j\leq e^{p^*}}\|_{\varphi_{r}}
+(p^*)^{1/r}\|(b_j^*)_{j> e^{p^*}}\|_{p^*}\bigr)
+\|a\|_q\|b\|_\infty ,&q< r,2\leq p^*,
\\
\|a\|_{\frac{2q^*}{q^*-2}}\bigl(\|(b_j^*)_{j\leq e^{p^*}}\|_{\varphi_{2}}
+\sqrt{p^*}\|(b_j^*)_{j> e^{p^*}}\|_{p^*}\bigr)&
\\
\quad+\|a\|_{\infty}\bigl(\|(b_j^*)_{j\leq e^{p^*}}\|_{\varphi_{r}}
+(p^*)^{1/r}\|(b_j^*)_{j> e^{p^*}}\|_{p^*}\bigr)
+\|a\|_q\|b\|_\infty ,&r\leq q< 2\leq p^*,
\\
\|a\|_{\infty}\|b\|_{p^*}+\bigl(\|(a_i^*)_{i\leq e^{q}}\|_{\varphi_{2}}
+\sqrt{q}\|(a_i^*)_{i> e^q}\|_{q}\bigr)\|b\|_{\frac{2p}{p-2}}&
\\
\quad+\bigl(\|(a_i^*)_{i\leq e^q}\|_{\varphi_{r}}+q^{1/r}\|(a_i^*)_{i> e^q}\|_q\bigr)
\|b\|_{\frac{r^*p}{p-r^*}},&p^*<r, 2\leq q,
\\
\|a\|_{\infty}\|b\|_{p^*}+\bigl(\|(a_i^*)_{i\leq e^{q}}\|_{\varphi_{2}}
+\sqrt{q}\|(a_i^*)_{i> e^q}\|_{q}\bigr)\|b\|_{\frac{2p}{p-2}}&
\\
\quad+\bigl(\|(a_i^*)_{i\leq e^q}\|_{\varphi_{r}}+q^{1/r}\|(a_i^*)_{i> e^q}\|_q\bigr)\|b\|_{\infty},&r\leq p^*< 2\leq q,
\\
\|a\|_{\infty}\bigl(\|(b_j^*)_{j\leq e^{p^*}}\|_{\varphi_{r}}
+(p^*)^{1/r}\|(b_j^*)_{j> e^{p^*}}\|_{p^*}\bigr)&
\\
\quad+
\bigl(\|(a_i^*)_{i\leq e^q}\|_{\varphi_{r}}+q^{1/r}\|(a_i^*)_{i> e^q}\|_q\bigr)\|b\|_{\infty},
&2\leq p^*,q.
\end{cases}
\end{align*}
\end{thm}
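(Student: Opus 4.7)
The plan is to apply the second (four-term) form of Theorem~\ref{thm:chevetWeibull} after absorbing the tensor weights into the index sets. Set
\[
S:=\{(a_is_i)_{i\le m}\colon s\in B_{q^*}^m\},\qquad T:=\{(b_jt_j)_{j\le n}\colon t\in B_p^n\},
\]
so that $\Ex\|(a_ib_jX_{i,j})\|_{\ell_p^n\to\ell_q^m}=\Ex\sup_{s'\in S,\, t'\in T}\sum_{i,j}X_{i,j}s'_it'_j$, and Theorem~\ref{thm:chevetWeibull} supplies a two-sided equivalence with a sum of four terms. It therefore suffices to evaluate each of these four terms and compare the total with the right-hand side of the statement.

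For the scalar factors, Hölder's inequality (together with its equality case) yields, for every $\rho\in[1,\infty]$,
\[
\sup_{s'\in S}\|s'\|_\rho=\begin{cases}\|a\|_{\rho q^*/(q^*-\rho)}&\text{if }q^*>\rho,\\ \|a\|_\infty&\text{if }q^*\le\rho,\end{cases}
\]
and analogously for $T$ with $q^*$ replaced by $p$. Specializing to $\rho\in\{r^*,2\}$ produces coefficients of the form $\|a\|_\infty$, $\|a\|_q$, $\|a\|_{r^*q^*/(q^*-r^*)}$, $\|a\|_{2q^*/(q^*-2)}$ (and the symmetric expressions in $b$). For the vector factors, Lemma~\ref{lem:sup-bp-lq-norm-Weibull} provides
\[
\Ex\|(b_jX_j)\|_{p^*}\sim\begin{cases}\|b\|_{p^*}&\text{if }p^*<r,\\[2pt] \|(b_j^*)_{j\le e^{p^*}}\|_{\varphi_r}+(p^*)^{1/r}\|(b_j^*)_{j>e^{p^*}}\|_{p^*}&\text{if }p^*\ge r,\end{cases}
\]
together with three companion bounds (replace $X_j$ by $g_j$ and $r$ by $2$, or work on the $a$-side with $q$ in place of $p^*$).

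Substituting these evaluations into the four-term Chevet equivalence gives a bound which a priori splits according to nine regimes, depending on whether each of $p^*,q$ lies in $[1,r)$, $[r,2)$, or $[2,\infty]$. These collapse to the six cases in the statement by the following absorption: whenever $q<2$, the Weibull contribution on the $a$-side is dominated by the Gaussian one, since $r^*\ge 2$ forces $\|a\|_{r^*q^*/(q^*-r^*)}\le\|a\|_{2q^*/(q^*-2)}$ and, for exponents below $r$, $\Ex\|(a_iX_i)\|_q\sim\|a\|_q\sim\Ex\|(a_ig_i)\|_q$; a symmetric absorption occurs when $p^*<2$. In each of the six surviving regimes one verifies by inspection that the dominant summands sum exactly to the displayed expression.

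The main obstacle is the combinatorial bookkeeping of the case analysis: in every regime one must write out the four Chevet terms explicitly, identify which of them dominate, and recognize their sum as the corresponding line of the multi-case formula. The symmetry $(a,q)\leftrightarrow(b,p^*)$ halves the work, and the two-sided character of Theorem~\ref{thm:chevetWeibull} ensures that both directions of each equivalence follow at once from the same case inspection, so no separate lower bound argument is required.
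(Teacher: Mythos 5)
Your overall route is the same as the paper's: apply Theorem~\ref{thm:chevetWeibull} with $S=\{(a_is_i)\colon s\in B_{q^*}^m\}$ and $T=\{(b_jt_j)\colon t\in B_p^n\}$, evaluate the scalar factors via H\"older (Lemma~\ref{lem:multiplier}) and the vector factors via the $\ell_\rho$-norm estimates for weighted Weibull/Gaussian sequences, then sort out cases (the paper uses the three-term form plus Theorem~\ref{thm:aibjgauss}, you use the four-term form directly, which amounts to the same thing). However, two points are wrong as written. First, Lemma~\ref{lem:sup-bp-lq-norm-Weibull} does not give $\Ex\|(b_jX_j)\|_{p^*}$: it estimates $\sup_{t\in B^k_{\rho_1}}\bigl\|\sum_i t_iX_i\bigr\|_{\rho_2}$, an unweighted and entirely different quantity. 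The evaluation you display is the content of Proposition~\ref{prop:ellrhoweibull} together with Remark~\ref{rem:ellsmallrhoweibull}, and the Gaussian companions come from Corollary~\ref{cor:ellrhogauss}; those are the results that must be invoked.

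Second, the absorption mechanism is misstated. It is not true that for all $q<2$ the Weibull term on the $a$-side is dominated by the Gaussian one: when $q<2\le p^*$ the term $\sup_{s\in B^m_{q^*}}\|(a_is_i)\|_{r^*}\,\Ex\|(b_jX_j)\|_{p^*}$ carries the $\varphi_r$-expression in $b$, which can be much larger than the Gaussian $\varphi_2$-expression, and indeed the second and third lines of the theorem retain both terms. The inequality $\|a\|_{r^*q^*/(q^*-r^*)}\le\|a\|_{2q^*/(q^*-2)}$ only absorbs that term when additionally $p^*<2$, where both vector factors are $\sim\|b\|_{p^*}$ (Remark~\ref{rem:ellsmallrhoweibull} and Corollary~\ref{cor:ellrhogauss}). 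Moreover, your list omits the absorption that is genuinely needed in the regime $2\le p^*,q$: there the Gaussian terms must be absorbed into the Weibull ones, which requires $\Ex\|(c_ig_i)\|_\rho\lesssim\Ex\|(c_iX_i)\|_\rho$ — inequality \eqref{eq:WeibGaussIneq} of Lemma~\ref{lem:WeibGauss}, or equivalently $\|\cdot\|_{\varphi_2}\le\|\cdot\|_{\varphi_r}$ together with $\rho^{1/2}\le\rho^{1/r}$. With the citations corrected and these (easy) dominations supplied, your case inspection does yield the theorem, and you are right that no separate lower-bound argument is needed since all ingredients are two-sided.
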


\begin{cor}	
\label{cor:tensor-log-concave}
Suppose that $r\in[1,2]$, $a\in \er^m$, $b\in \er^n$, and 
$(X_{i,j})_{i\le m, j\le n}$ is a random matrix with independent $\psi_r$ entries with constant $\sigma$
 such that $\Ex|X_{ij}|\geq \gamma$. 
Then 
\begin{equation}
\label{eq:estproduct2}
\gamma(D_1+D_2)
\lesssim
\Ex\bigl\|(a_ib_jX_{i,j})_{i\le m, j\le n}\bigr\|_{\ell_p^n\rightarrow\ell_q^m}
\lesssim \sigma\bigl( q^{1/r}D_1+ (p^*)^{1/r}D_2\bigr).
\end{equation}
Moreover, if $(X_{i,j})_{i\le m, j\le n}$ is an  isotropic log-concave unconditional 	
random matrix, then two-sided estimate \eqref{eq:estproduct2} holds with $r=\sigma=\gamma=1$.
\end{cor}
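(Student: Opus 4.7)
The overall strategy is to deduce the corollary from Theorem~\ref{thm:aibjWeibull} combined with the lower bounds for tensor-structured matrices obtained in \cite{APSS}. For the upper bound in the $\psi_r$ setting, I would first symmetrize the centered entries (by replacing $X_{i,j}$ with $\varepsilon_{i,j}X_{i,j}$ for independent Rademachers $\varepsilon_{i,j}$) and then apply Lemma~\ref{lem:Weibull-to-psi_r} to compare the symmetric $\psi_r$ matrix with $\sigma$ times an iid symmetric Weibull matrix of shape parameter~$r$. At this point Theorem~\ref{thm:aibjWeibull} provides an explicit bound split into six ranges of $(p^*,q)$.

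The next step is to verify, case by case, that in each of those six ranges every summand in the bound of Theorem~\ref{thm:aibjWeibull} is dominated by $q^{1/r}D_1+(p^*)^{1/r}D_2$. In the tensor case one computes directly that $D_1=\|a\|_q\|b\|_{2p/(p-2)}$ when $p^*<2$, $D_1=\|a\|_q\|b\|_\infty$ when $p^*\ge 2$, and dually for $D_2$. The Orlicz-norm summands are controlled using the elementary estimates
\begin{equation*}
\|(b_j^*)_{j\leq e^{p^*}}\|_{\varphi_r}+(p^*)^{1/r}\|(b_j^*)_{j>e^{p^*}}\|_{p^*}\lesssim (p^*)^{1/r}\|b\|_{p^*},
\end{equation*}
together with its analogue for $a$ (and the corresponding $\varphi_2$-bounds with $\sqrt{p^*}$, $\sqrt{q}$ in place of $(p^*)^{1/r}$, $q^{1/r}$), while the auxiliary norms $\|a\|_{r^*q^*/(q^*-r^*)}$ and $\|b\|_{r^*p/(p-r^*)}$ are compared with $\|a\|_{2q^*/(q^*-2)}$ and $\|b\|_{2p/(p-2)}$ respectively by monotonicity of $\ell_\rho$-norms, using $r\in[1,2]$ (so $r^*\ge 2$). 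This case analysis is the main technical obstacle of the proof, though each individual comparison is elementary.

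For the lower bound I would invoke \cite[Proposition~5.1 and Corollary~5.2]{APSS}, which under the assumption $\Ex|X_{i,j}|\ge\gamma$ yield $\gamma(D_1+D_2)\lesssim \Ex\|(a_ib_jX_{i,j})\|_{\ell_p^n\to\ell_q^m}$ via a symmetrization and contraction argument. Finally, in the isotropic log-concave unconditional case, each entry $Y_{i,j}$ is centered with $\Ex|Y_{i,j}|\ge c>0$ (a standard consequence of log-concavity together with $\Ex Y_{i,j}^2=1$) and is $\psi_1$ with a universal constant, so the lower bound and much of the upper bound follow from the first part with $r=1$. A more direct route to the upper bound is to apply inequality~\eqref{eq:comp-log-concave-exponential} to the bounded set $U=\{(a_ib_js_it_j)_{i\le m,j\le n}\colon s\in B_{q^*}^m,\ t\in B_p^n\}\subset\er^{mn}$, which reduces the problem to iid symmetric exponential entries and hence to Theorem~\ref{thm:aibjWeibull} with $r=1$.
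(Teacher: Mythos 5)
Your proposal is correct and follows essentially the same route as the paper: the upper bound via Theorem~\ref{thm:aibjWeibull} together with the tensor computation of $D_1,D_2$ and the Orlicz/monotonicity estimates exactly as in the proof of Corollary~\ref{cor:conj-tensor-case}, then Lemma~\ref{lem:Weibull-to-psi_r} (after symmetrization) in the $\psi_r$ case or inequality~\eqref{eq:comp-log-concave-exponential} in the log-concave case, and the lower bound from (the proof of) \cite[Proposition~5.1]{APSS}. The only point to phrase carefully is the log-concave case, where the entries need not be independent, so you cannot literally invoke ``the first part with $r=1$''; but your fallback arguments --- \eqref{eq:comp-log-concave-exponential} for the upper bound and the observation that the APSS lower-bound argument needs only unconditionality plus $\Ex|X_{i,j}|\ge c$ --- are precisely what the paper uses.
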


\begin{proof}
The lower bound follows by the proof of \cite[Proposition~5.1]{APSS} (which in fact shows that the assertion of \cite[Proposition~5.1]{APSS} holds for unconditional random matrices whose entries satisfy $\Ex |X_{ij}|\ge c$). 
To derive the upper bound we proceed similarly as in the proof of Corollary~\ref{cor:conj-tensor-case} using 
Theorem~\ref{thm:aibjWeibull} 
(instead of Theorem~\ref{thm:aibjgauss}) and then apply Lemma~\ref{lem:Weibull-to-psi_r} --- or inequality~\eqref{eq:comp-log-concave-exponential} in the log-concave case.
\end{proof}

Corollary~\ref{cor:tensor-log-concave} suggests, that in a non-tensor case it makes sense to pose the following counterpart of  conjecture  \eqref{eq:conjAPSS}.

\begin{conj}
\label{conj:weighted-Weibull}
Assume that $r\in[1,2]$, $(a_{i,j})_{i\le m,j\le n}$ is a deterministic $m\times n$ matrix, and 
$(X_{i,j})_{i\le m, j\le n}$  be iid symmetric Weibull r.v.'s with parameter $r\in [1,2]$.
Let
\begin{equation*}
D_{3,r}=
\begin{cases}
\Ex \max_{i \le m,j\le n} |a_{i,j} X_{i,j}| &  \text{if }\  p\leq 2\leq q,\\
\max_{j\le n} b_j^{*}\ln^{1/r} (j+1)   &    \text{if }\ p\leq q\leq 2,\\
\max_{i\le m}d_i^{*}\ln^{1/r} (i+1) &   \text{if } \ 2\leq p \leq q,\\
0 &   \textrm{if } \ q<p.
\end{cases} 
\end{equation*}
Is it true that
\[
\Ex \|(a_{i,j}X_{i,j})_{i,j}\colon \ell_{p}^n\to\ell_q^m\|
 \sim_{p,q} \ D_1+D_2 + D_{3,r}\quad?
\]
\end{conj}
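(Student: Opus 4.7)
The plan is to work within the generic chaining framework developed in Section~\ref{sect:proofs}. Setting
\[
U := \bigl\{(a_{i,j} s_i t_j)_{i\le m, j\le n} : s \in B_{q^*}^m, t \in B_p^n\bigr\} \subset \er^{mn},
\]
the operator norm coincides with $\sup_{u \in U} \sum_{i,j} X_{i,j} u_{i,j}$, and Talagrand's equivalence \eqref{eq:gamma-est-weibull} reduces the conjecture to showing
\[
\gamma_2(U, d_2) + \gamma_r(U, d_{r^*}) \sim_{p,q} D_1 + D_2 + D_{3,r}.
\]

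For the lower bound, I would adapt the argument of \cite[Proposition~5.1]{APSS}. Its proof carries over to iid symmetric Weibulls once the Gaussian moment estimates are replaced by $\|X_{i,j}\|_\rho \sim \rho^{1/r}$, yielding $D_1 + D_2 \lesssim_{p,q} \Ex\|(a_{i,j}X_{i,j})\|_{\ell_p^n\to \ell_q^m}$. For the $D_{3,r}$ piece, the inequality $\Ex\max_{i,j}|a_{i,j}X_{i,j}| \le \Ex\|(a_{i,j}X_{i,j})\|_{\ell_p^n\to\ell_q^m}$ is immediate from testing against canonical basis vectors; in the regimes $p\le q \le 2$ and $2\le p\le q$ one restricts the supremum to sparse vectors supported on the coordinates realizing the $b_j^*$ or $d_i^*$ sequences, and the Weibull moment asymptotic upgrades the $\sqrt{\Log j}$ factor from the Gaussian analysis to $(\Log j)^{1/r}$, producing the desired $D_{3,r}$. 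The case $q<p$ is then trivially the same as in the Gaussian setting since $D_{3,r}=0$.

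The upper bound is the principal obstacle. A first structural observation is that the conjecture contains \eqref{eq:conjAPSS} as the special case $r=2$ (where Weibulls are comparable to centered Gaussians and $D_{3,2}\sim D_3$), so any complete proof would necessarily resolve the still-open Gaussian APSS conjecture. Granting an admissible sequence certifying $\gamma_2(U, d_2) \lesssim_{p,q} D_1 + D_2 + D_3$ as in \cite{APSS}, my plan for controlling $\gamma_r(U, d_{r^*})$ would be to reuse that sequence and measure chaining increments in the stronger $d_{r^*}$ metric. The key input would be an analogue of the tensor decomposition \eqref{eq:gamma1-tensor-sep} for non-tensor sets, which I would attempt by partitioning $U$ according to the magnitude of $|a_{i,j}|$ and coupling the Gaussian analysis applied to a truncation $X_{i,j}\mathbf{1}_{\{|X_{i,j}|\le 2^{l/r}\}}$ (absorbed into $D_1+D_2$) with a union bound over the few coordinates where $|X_{i,j}| > 2^{l/r}$ (absorbed into $D_{3,r}$).

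The hardest step, beyond the Gaussian conjecture itself, is that chaining at the $\gamma_r$-scale requires much finer control of large-deviation contributions: a single heavy coordinate now contributes a full $(\Log n)^{1/r}$ factor rather than $\sqrt{\Log n}$, so the truncation level must be synchronized with the admissible partition at every scale simultaneously in $d_2$ and $d_{r^*}$. In the absence of the tensor structure that trivialized \eqref{eq:gamma1-tensor-sep} and made Corollary~\ref{cor:tensor-log-concave} accessible, this appears to demand a new combinatorial ingredient, plausibly a weighted variant of the $A_p$-type covering estimates of \cite{APSS} adapted to the $\ell_{r^*}$ metric, and I would expect this to be where the proof genuinely stalls.
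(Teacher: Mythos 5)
The statement you are addressing is posed in the paper as an open question, not a theorem: Conjecture~\ref{conj:weighted-Weibull} is stated with a question mark, and the paper's only ``proof content'' for it is the surrounding remark that it holds when $p\in\{1,\infty\}$ or $q\in\{1,\infty\}$ (by the methods of \cite[Propositions~1.8 and 1.10]{APSS}), in the case $p=q=2$ (via \cite[Theorem~4.4]{LvHY}), and in the tensor case $a_{i,j}=a_ib_j$ (Theorem~\ref{thm:aibjWeibull} and Corollary~\ref{cor:tensor-log-concave}). Your proposal does not close this gap, and you say so yourself: the upper bound is exactly where you ``expect the proof to genuinely stall.'' Your own reduction makes the obstruction precise --- at $r=2$ the conjecture is equivalent (via Lemma~\ref{lem:WeibGauss}) to the Gaussian conjecture \eqref{eq:conjAPSS}, which the paper records as known only up to logarithmic factors. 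Consequently your step ``granting an admissible sequence certifying $\gamma_2(U,d_2)\lesssim_{p,q} D_1+D_2+D_3$ as in \cite{APSS}'' assumes precisely the unproven Gaussian statement; there is no such admissible sequence available in the literature to reuse, so the subsequent truncation/union-bound scheme for $\gamma_r(U,d_{r^*})$ has no starting point. The parts of your sketch that are sound (the reduction to $\gamma_2+\gamma_r$ via \eqref{eq:gamma-est-weibull}, and the lower bound $D_1+D_2+D_{3,r}\lesssim_{p,q}\Ex\|(a_{i,j}X_{i,j})\|_{\ell_p^n\to\ell_q^m}$ by adapting \cite[Proposition~5.1]{APSS} and testing on coordinate vectors) are believable but are not the content of the conjecture; the two-sidedness is.

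To be clear about why the paper itself could verify the tensor case but nothing more: the whole proof of Theorem~\ref{thm:aibjWeibull} rests on the decomposition \eqref{eq:gamma1-tensor-sep}, which splits $\gamma_r(S\otimes T,d_{r^*})$ because the index set is an exact tensor product; for the set $U$ you define there is no analogous splitting, and your proposed substitute (partitioning by the magnitude of $|a_{i,j}|$ and truncating $X_{i,j}$ at level $2^{l/r}$) is a heuristic, not an argument --- you do not exhibit the admissible sequence, nor a mechanism synchronizing the truncation with the partition across scales, which is the new combinatorial ingredient you correctly identify as missing. So the proposal should be read as a reasonable research plan that confirms the statement's status as open, not as a proof; any referee would reject it as a proof of the conjecture, and accepting the upper bound would in particular claim a resolution of \eqref{eq:conjAPSS}, which neither you nor the paper provides.
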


\begin{rem}
Using similar methods as in the proofs of \cite[Propositions~1.8 and 1.10]{APSS}, one may show that 
Conjecture~\ref{conj:weighted-Weibull} holds whenever $p\in \{1,\infty\}$ or $q\in \{1,\infty\}$. 
Moreover, it follows by \cite[Theorem~4.4]{LvHY} and a counterpart of \cite[equation~(1.11)]{APSS} for 
iid Weibull r.v.'s that Conjecture~\ref{conj:weighted-Weibull} holds in the case $p=2=q$.
\end{rem}

Now we provide the following lemma yielding the equivalence between  
\eqref{eq:rectiid} and the assertion of  Corollary~\ref{cor:rectWeibull1}.

\begin{lem}	
\label{lem:sup-bp-lq-norm-Weibull}
Let $X_1,X_2,\ldots,X_k$ be iid symmetric Weibull r.v.'s with  parameter  $r\in [1,2]$
and let $ \rho_1,\rho_2 \in [1,\infty)$. Then 
\begin{align*}
\sup_{t\in B^k_{\rho_1}}\Bigl\|\sum_{i=1}^kt_iX_i\Bigr\|_{ \rho_2}
& \sim 
\begin{cases}
 \rho_2^{1/r},&  1\leq \rho_1\leq 2,
\\
 \rho_2^{1/r}+\sqrt{ \rho_2}k^{1/2-1/{ \rho_1}},& 2\leq \rho_1\leq r^*,
\\
 \rho_2^{1/r}k^{1/r^*-1/ \rho_1}+\sqrt{  \rho_2}k^{1/2-1/{ \rho_1}},&  \rho_1\geq r^*
\end{cases}
\\
& \sim  \rho_2^{1/r} k^{\bigl(\frac{1}{ \rho_1^*} - \frac 1r\bigr)\lor 0} 
+\sqrt{ \rho_2} k^{\bigl(\frac 1{ \rho_1^*}-\frac 12\bigr)\lor 0}.
\end{align*}
\end{lem}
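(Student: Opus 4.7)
The plan is to reduce the lemma to a deterministic computation via a standard two-sided moment estimate for sums of iid symmetric Weibull random variables. The cornerstone is the Gluskin--Kwapie\'n--Hitczenko-type bound
\[
\Bigl\|\sum_{i=1}^k t_i X_i\Bigr\|_{\rho_2}\sim \sqrt{\rho_2}\,\|t\|_2+\rho_2^{1/r}\,\|t\|_{r^*},\qquad \rho_2\ge 1,
\]
valid for iid symmetric Weibull r.v.'s with shape $r\in[1,2]$ and any $t\in\er^k$. This specializes to the classical Gluskin--Kwapie\'n bound when $r=1$ and to Khintchine--Kahane when $r=2$; for intermediate $r$ it can either be cited from Lata\l{}a's general moment formulas for sums of independent random variables or, in the present setting, extracted from \eqref{eq:gamma-est-weibull} applied to the one-element index set (passing from $\Ex\sup$-type bounds to $L_{\rho_2}$-moments of a single linear form).

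The second step is to observe that taking the supremum over $t\in B^k_{\rho_1}$ commutes with the two terms up to a universal constant. Since $\|t\|_2$ and $\|t\|_{r^*}$ are nonnegative,
\[
\max\Bigl(\sqrt{\rho_2}\sup_{t\in B^k_{\rho_1}}\|t\|_2,\ \rho_2^{1/r}\sup_{t\in B^k_{\rho_1}}\|t\|_{r^*}\Bigr)
\le \sup_{t\in B^k_{\rho_1}}\Bigl(\sqrt{\rho_2}\,\|t\|_2+\rho_2^{1/r}\,\|t\|_{r^*}\Bigr)
\le \sqrt{\rho_2}\sup_{t\in B^k_{\rho_1}}\|t\|_2+\rho_2^{1/r}\sup_{t\in B^k_{\rho_1}}\|t\|_{r^*},
\]
so sum and supremum effectively commute.

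The third step is a purely deterministic computation: for any $p,\rho_1\ge 1$, the standard inequalities between $\ell_p$-norms on $\er^k$ yield
\[
\sup_{t\in B^k_{\rho_1}}\|t\|_p=k^{(1/p-1/\rho_1)\vee 0},
\]
the supremum being attained at a coordinate vector when $p\ge\rho_1$ and at the uniform vector $k^{-1/\rho_1}(1,\ldots,1)$ when $p\le\rho_1$. Plugging in $p=2$ and $p=r^*$ (and recalling $r^*\ge 2$ for $r\in[1,2]$) yields the compact form
\[
\sup_{t\in B^k_{\rho_1}}\Bigl\|\sum t_iX_i\Bigr\|_{\rho_2}\sim \rho_2^{1/r}k^{(1/\rho_1^*-1/r)\vee 0}+\sqrt{\rho_2}\,k^{(1/\rho_1^*-1/2)\vee 0}.
\]

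Finally, the piecewise formula falls out by considering the three ranges determined by the thresholds $\rho_1=2$ and $\rho_1=r^*$; in the lowest range one absorbs $\sqrt{\rho_2}$ into $\rho_2^{1/r}$ using $r\le 2$, and in the middle range one uses $\rho_1\le r^*\Longleftrightarrow 1/\rho_1^*\le 1/r$ to kill the second exponent. Only step~1 carries any real content; everything that follows is bookkeeping. The only place to be careful is that the constants in the moment estimate must be uniform in $r\in[1,2]$, which is exactly what the cited results provide.
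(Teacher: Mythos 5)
Your proposal is correct and follows essentially the same route as the paper: the paper also reduces the lemma to the Gluskin--Kwapie\'n-type moment estimate $\bigl\|\sum_i t_iX_i\bigr\|_{\rho_2}\sim \rho_2^{1/r}\|t\|_{r^*}+\sqrt{\rho_2}\,\|t\|_2$ and then computes $\sup_{t\in B^k_{\rho_1}}\|t\|_{\rho}=k^{(1/\rho-1/\rho_1)\vee 0}$ for $\rho\in\{2,r^*\}$, with the rest being the same bookkeeping. (Only your side remark that the moment bound could be extracted from the $\gamma$-functional estimates is shaky, but it is inessential since the direct citation you give first is exactly what the paper uses.)
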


\begin{proof}

The Gluskin--Kwapie{\'n} inequality \cite{GK} easily implies that  
\begin{equation}  
\label{eq:comp-moms-sums-Weibull}
\Bigl\|\sum_{i=1}^kt_iX_i\Bigr\|_{\rho_2}\sim  \rho_2^{1/r}\|t\|_{r^*}+ \rho_2^{1/2}\|t\|_2.
\end{equation}
Hence it is enough to observe that
\begin{equation*}	
\sup_{t\in B^k_{ \rho_1}}\|t\|_{ \rho}
=
\begin{cases}
1, &{ \rho_1\leq \rho},
\\
k^{1/\rho-1/{\rho_1}},& { \rho_1\geq \rho}.
\end{cases}
\qedhere
\end{equation*}
\end{proof}

Before proving Theorems~\ref{thm:aibjgauss} and \ref{thm:aibjWeibull} we need to formulate several technical results.
H\"older's inequality yields the following simple lemma.

\begin{lem}
\label{lem:multiplier}
Let $1\leq \rho_1 ,\rho_2\leq \infty$ and $c=(c_i)\in \er^k$. Then
\[
\sup_{t\in B_{\rho_1}^k}\|(c_it_i)\|_{\rho_2}
=
\begin{cases}
\|c\|_{\infty},& \rho_1\leq \rho_2,
\\
\|c\|_{\rho_1\rho_2/(\rho_1-\rho_2)},& \rho_2 <\rho_1<\infty,
\\
\|c\|_{\rho_2},& \rho_1=\infty.
\end{cases}
\]
\end{lem}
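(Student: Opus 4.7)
The plan is to treat the three cases separately, each reducing to a direct application of H\"older's inequality together with an explicit extremizer, so that both the upper bound and its matching lower bound follow immediately.

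First, when $\rho_1=\infty$, the unit ball is $B_\infty^k=[-1,1]^k$ and the supremum is achieved coordinatewise by $t_i=\mathrm{sgn}(c_i)$, which gives $\|(c_it_i)\|_{\rho_2}=\|c\|_{\rho_2}$; no $t\in B_\infty^k$ can do better since $|c_it_i|\le|c_i|$ pointwise. This handles the edge case where the formula $\rho_1\rho_2/(\rho_1-\rho_2)$ would degenerate.

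Next, when $1\le\rho_1\le\rho_2\le\infty$, I would start from the pointwise estimate $|c_it_i|\le\|c\|_\infty|t_i|$, take $\ell_{\rho_2}$-norms, and invoke the monotonicity $\|t\|_{\rho_2}\le\|t\|_{\rho_1}\le 1$ (standard for $\rho_1\le\rho_2$ in finite dimensions) to conclude $\|(c_it_i)\|_{\rho_2}\le\|c\|_\infty$. Equality is realised by taking $t=e_{i_0}$, where $i_0$ achieves $|c_{i_0}|=\|c\|_\infty$.

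Finally, in the main case $\rho_2<\rho_1<\infty$, set $\alpha=\rho_1/\rho_2>1$, so $\alpha^*=\rho_1/(\rho_1-\rho_2)$, and apply H\"older to the sum $\sum_i|c_i|^{\rho_2}|t_i|^{\rho_2}$ with conjugate exponents $(\alpha^*,\alpha)$:
\[
\sum_i|c_it_i|^{\rho_2}\le\Bigl(\sum_i|c_i|^{\rho_1\rho_2/(\rho_1-\rho_2)}\Bigr)^{(\rho_1-\rho_2)/\rho_1}\Bigl(\sum_i|t_i|^{\rho_1}\Bigr)^{\rho_2/\rho_1}.
\]
Raising to the power $1/\rho_2$ and using $\|t\|_{\rho_1}\le 1$ yields the claimed upper bound. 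For sharpness, one chooses $t_i$ proportional to $\mathrm{sgn}(c_i)|c_i|^{\rho_2/(\rho_1-\rho_2)}$, normalised to satisfy $\|t\|_{\rho_1}=1$; the standard equality condition in H\"older's inequality then shows the bound is attained.

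There is no real obstacle here: the whole argument is H\"older's inequality in finite dimensions plus an explicit choice of extremiser. The only point requiring minor attention is the case separation at $\rho_1=\infty$, where the exponent $\rho_1\rho_2/(\rho_1-\rho_2)$ must be interpreted as $\rho_2$ by a limiting convention.
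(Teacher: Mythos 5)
Your proof is correct, and it follows exactly the route the paper indicates: the paper gives no detailed argument, simply noting that the lemma follows from H\"older's inequality, which is precisely your case analysis with the explicit extremizers supplying sharpness.
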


The next result is a two-sided bound for the $\ell_\rho$-norms of a weighted sequence of independent
Weibull r.v.'s. 
Much more general two-sided estimates for the Orlicz norms of weighted vectors with iid coordinates were obtained in \cite{GLSW}. 
However, the  formula stated therein is quite involved and not easy to decrypt in the case of $\ell_\rho$-norms. 
Therefore we give an alternative proof in our special setting,  providing a form of the two-sided estimate 
which is more handy for the purpose of proving Theorems~\ref{thm:aibjgauss} and \ref{thm:aibjWeibull}.

\begin{prop}
\label{prop:ellrhoweibull}
Let $(X_{i})_{i\leq k}$ be iid symmetric Weibull r.v.'s with  parameter  $r\in [1,2]$.
Then for every $1\leq \rho\leq  \infty$ and every sequence $c=(c_i)_{i=1}^k$ we have
\begin{equation}
\label{eq:ellrhoweibull}
\Ex\bigl\|(c_iX_i)_{i=1}^k\bigr\|_\rho\sim
\|(c_i^*)_{i\leq e^\rho}\|_{\varphi_{r}}
+\rho^{1/r}\|(c_i^*)_{i> e^\rho}\|_\rho,
\end{equation}
where   $\varphi_r(x)=\exp(2-2x^{-r})$ and $(c_i^*)_{i\leq k}$ is the nonincreasing rearrangement of $(|c_i|)_{i\leq k}$.
\end{prop}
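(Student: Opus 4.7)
The plan is to split $\|(c_iX_i)\|_\rho$ at the index $e^\rho$ into a ``head'' ($i\le e^\rho$) and a ``tail'' ($i>e^\rho$); without loss of generality $c_i=c_i^*\ge 0$. Write $s_1:=\|(c_i^*)_{i\le e^\rho}\|_{\varphi_r}$ and $s_2:=\rho^{1/r}\|(c_i^*)_{i>e^\rho}\|_\rho$, so the assertion is $\Ex\|(c_iX_i)\|_\rho\sim s_1+s_2$. For the upper bound the triangle inequality splits the $\ell_\rho$-norm. On the head, $\|\cdot\|_\rho\le(e^\rho)^{1/\rho}\|\cdot\|_\infty=e\|\cdot\|_\infty$ reduces matters to bounding $\Ex\max_{i\le e^\rho}c_i^*|X_i|$; the tail bound $\Pr(\max\ge t)\le\sum_i e^{-(t/c_i^*)^r}$ together with the defining identity $\sum_i e^{-2(s_1/c_i^*)^r}=e^{-2}$ (which forces $(s_1/c_i^*)^r\ge 1$ for each $i$) gives $\sum_i e^{-(t/c_i^*)^r}\le e^{-(t/s_1)^r+2}$ for $t\ge 2^{1/r}s_1$, and integration yields $\Ex\max\lesssim s_1$. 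On the tail, Jensen combined with Stirling's bound $(\Ex|X|^\rho)^{1/\rho}\sim\rho^{1/r}$ yields $\Ex\|(c_iX_i)_{i>e^\rho}\|_\rho\le\|X\|_\rho\|(c_i^*)_{i>e^\rho}\|_\rho\lesssim s_2$.

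For the lower bound I would show $\Ex\|(c_iX_i)\|_\rho\gtrsim s_1$ and $\gtrsim s_2$ separately. The Orlicz piece comes from $\|\cdot\|_\rho\ge\|\cdot\|_\infty$ together with a matching max lower bound: $1-x\le e^{-x}$ yields $\Pr(\max\ge t)\ge 1-\exp(-\sum_i e^{-(t/c_i^*)^r})$, and at $t=s_1$, writing $\beta_i:=(s_1/c_i^*)^r$, the estimate $\sum_i e^{-\beta_i}=\sum_i\sqrt{e^{-2\beta_i}}\ge\sqrt{\sum_i e^{-2\beta_i}}=e^{-1}$ gives $\Pr(\max\ge s_1)\ge 1-e^{-1/e}$, whence $\Ex\max\gtrsim s_1$.

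The moment lower bound $\Ex\|(c_iX_i)\|_\rho\gtrsim s_2$ is the main obstacle. I would truncate at $T:=2^{-1/r}\rho^{1/r}$ and set $\mathbb{1}_i:=\mathbb{1}\{|X_i|\ge T\}$, i.i.d.\ Bernoulli$(p)$ with $p=e^{-\rho/2}$, so that $\|(c_iX_i)\|_\rho\ge T\bigl(\sum_{i>e^\rho}c_i^\rho\mathbb{1}_i\bigr)^{1/\rho}$. Choose $t^*>0$ to be the largest value with $t^\rho N(t)\ge\|(c_i^*)_{i>e^\rho}\|_\rho^\rho/2$ (where $N(t):=\#\{i>e^\rho:c_i^*\ge t\}$), and put $S:=\{i>e^\rho:c_i^*\ge t^*\}$, $N:=|S|$; then $(t^*)^\rho N\gtrsim\|(c_i^*)_{i>e^\rho}\|_\rho^\rho$ and $\|(c_iX_i)\|_\rho\ge Tt^*\bigl(\sum_{i\in S}\mathbb{1}_i\bigr)^{1/\rho}$. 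If $N\ge e^{\rho/2}$ (so $Np\ge 1$), Paley--Zygmund on the Bernoulli sum $\sum_{i\in S}\mathbb{1}_i$ gives $\Ex\bigl(\sum_{i\in S}\mathbb{1}_i\bigr)^{1/\rho}\gtrsim(Np)^{1/\rho}\sim N^{1/\rho}$, and then $\Ex\|(c_iX_i)\|_\rho\gtrsim\rho^{1/r}t^*N^{1/\rho}\sim s_2$. If $N<e^{\rho/2}$, then $N^{1/\rho}<e^{1/2}$ is bounded, so $s_2\lesssim\rho^{1/r}t^*\le\rho^{1/r}c_{e^\rho}^*\lesssim s_1$, the last bound via $s_1\gtrsim c_{e^\rho}^*\rho^{1/r}$ (comparison of $(c_i^*)_{i\le e^\rho}$ with the constant sequence $(c_{e^\rho}^*)_{i\le e^\rho}$). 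The hardest part is harmonising the choice of $t^*$, the Paley--Zygmund threshold $N\ge e^{\rho/2}$, and the Orlicz absorption outside this regime---precisely why the split is at the index $e^\rho$.
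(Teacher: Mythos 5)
Your upper bound and your lower bound for the Orlicz (``head'') term are fine and match the paper's argument: split at index $e^\rho$, show $\Ex\max_{i\le e^\rho}c_i^*|X_i|\sim\|(c_i^*)_{i\le e^\rho}\|_{\varphi_r}$ via the defining identity of the Orlicz norm, and bound the tail by Jensen and $\|X_1\|_\rho\sim\rho^{1/r}$. The genuine gap is in the remaining lower bound $\Ex\|(c_iX_i)\|_\rho\gtrsim\rho^{1/r}\|(c_i^*)_{i>e^\rho}\|_\rho$: your reduction to a single level set is not available. The threshold $t^*$ with $(t^*)^\rho N(t^*)\ge\frac12\sum_{i>e^\rho}(c_i^*)^\rho$ need not exist, because $\sup_{t>0}t^\rho N(t)$ is the weak-$\ell_\rho$ quasinorm of the tail coefficients (to the power $\rho$), which can be smaller than $\sum_{i>e^\rho}(c_i^*)^\rho$ by a factor of order $\ln(\text{number of tail terms})$. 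Concretely, take $c_i^*=i^{-1/\rho}$ for $e^\rho<i\le k$: then $t^\rho N(t)\le 1$ for every $t>0$, while $\sum_{i>e^\rho}(c_i^*)^\rho\sim\ln k-\rho\to\infty$, so no admissible $t^*$ exists; a single-level Paley--Zygmund bound can only produce $\gtrsim\rho^{1/r}$ where $\gtrsim\rho^{1/r}(\ln k-\rho)^{1/\rho}$ is needed, and this loss is unbounded (for fixed $\rho$ and large $k$), so it cannot be absorbed by your case analysis $N\ge e^{\rho/2}$ versus $N<e^{\rho/2}$.

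The truncation idea can be repaired by applying the second-moment argument to the full weighted sum rather than to one level: with $T=2^{-1/r}\rho^{1/r}$, $p=\Pr(|X_i|\ge T)=e^{-\rho/2}$, $A=\sum_{i>e^\rho}(c_i^*)^\rho$ and $Z=\sum_{i>e^\rho}(c_i^*)^\rho I_{\{|X_i|\ge T\}}$, one has $\Ex Z=pA$ and $\operatorname{Var}(Z)\le p\sum_{i>e^\rho}(c_i^*)^{2\rho}\le p\,(c^*_{\lfloor e^\rho\rfloor})^\rho A$. If $(c^*_{\lfloor e^\rho\rfloor})^\rho\le pA$, Paley--Zygmund gives $\Pr(Z\ge pA/2)\ge 1/8$, whence $\Ex\|(c_iX_i)\|_\rho\ge T\,\Ex Z^{1/\rho}\gtrsim\rho^{1/r}A^{1/\rho}$; otherwise $A<e^{\rho/2}(c^*_{\lfloor e^\rho\rfloor})^\rho$, so $\rho^{1/r}A^{1/\rho}\lesssim\rho^{1/r}c^*_{\lfloor e^\rho\rfloor}\lesssim\|(c_i^*)_{i\le e^\rho}\|_{\varphi_r}$, which is exactly your absorption step. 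For comparison, the paper proves this lower bound differently: it invokes Lata{\l}a's moment comparison for norms of vectors with independent log-concave-tailed coordinates (\cite{LaLCT}), namely $\Ex\|(c_iX_i)_{i>e^\rho}\|_\rho\ge C^{-1}(\Ex\|(c_iX_i)_{i>e^\rho}\|_\rho^\rho)^{1/\rho}-\sup_{t\in B_{\rho^*}}\|\sum_{i>e^\rho}t_ic_iX_i\|_\rho$, and controls the subtracted term by the Gluskin--Kwapie{\'n} inequality and the head Orlicz norm; your elementary route, once fixed as above, avoids that external theorem.
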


\begin{rem}	
\label{rem:ellsmallrhoweibull}
For $\rho\leq 2$ we have $\Ex\|(c_iX_i)\|_\rho\sim \|c\|_\rho$. It is not hard to deduce this 
from \eqref{eq:ellrhoweibull}. Alternatively one may use the Khintchine--Kahane-type inequality 
$\Ex\|(c_iX_i)\|_\rho\sim (\Ex\|(c_iX_i)\|_\rho^\rho)^{1/\rho}$.
\end{rem}

\begin{proof}[Proof of Proposition \ref{prop:ellrhoweibull}]
First we show that for every $1\leq l \leq k$,
\begin{equation}
\label{eq:ellinftyweibull}
\Ex\bigl\|(c_iX_i)_{i\leq l}\bigr\|_\infty \sim \|(c_i)_{i\leq l}\|_{\varphi_{r}}.
\end{equation}
Let $t= 2^{1/r}\|(c_i)_{ i\leq l}\|_{\varphi_{r}}$.
Then
\[
\sum_{i=1}^{ l}\Pr(|c_iX_i|\geq t)=\sum_{i=1}^{ l}e^{-2}\varphi_r\bigl(2^{1/r}|c_i|/t\bigr)
=e^{-2}.
\]
This and the independence of $X_{i}$'s imply
\[
\Ex\bigl\|(c_iX_i)_{i=1}^{ l}\bigr\|_\infty\geq t\Pr(\max_{ i\leq l}|c_iX_i|\geq t)\gtrsim t.
\]
Moreover, for $u\geq 1$,
\begin{align*}
\Pr(\max_{ i\leq l}|c_iX_i|\geq ut)
&\leq \sum_{i=1}^{ l}\Pr(|c_iX_i|\geq ut)
=\sum_{i=1}^{ l}e^{-2}\varphi_r\bigl(2^{1/r}|c_i|/(tu)\bigr)
\\ & \leq \sum_{i=1}^{ l}e^{-2u^r}\varphi_r\bigl(2^{1/r}|c_i|/t\bigr)
 =e^{-2u^r},
\end{align*}
where the  second inequality follows since $(xu)^r\geq x^r+u^r-1$ for $x,u\geq 1$. Thus
integration by parts yields $\Ex\|(c_iX_i)_{i\leq l}\|_\infty\lesssim t$ and \eqref{eq:ellinftyweibull} follows.

To establish \eqref{eq:ellrhoweibull} for $\rho\in [1,\infty)$ we may and will assume
that $c_1\geq c_2\geq \ldots\geq c_k\geq 0$. Then $c_k^*=c_k$. 

We have by \eqref{eq:ellinftyweibull} applied with $l=\lfloor e^\rho\rfloor \wedge k$,
\[
\Ex\|(c_iX_i)_{i\leq e^\rho}\|_\rho\sim \Ex\|(c_iX_i)_{i\leq e^\rho}\|_\infty
\sim \|(c_i)_{i\leq e^\rho}\|_{\varphi_{r}}.
\]
Moreover,
\[
\Ex\|(c_iX_i)_{i> e^\rho}\|_\rho\leq \bigl(\Ex\|(c_iX_i)_{i> e^\rho}\|_\rho^\rho\bigr)^{1/\rho}
=\|X_1\|_\rho\|(c_i)_{i> e^\rho}\|_\rho
\sim \rho^{1/r}\|(c_i)_{i> e^\rho}\|_\rho.
\]
Therefore the upper bound in \eqref{eq:ellrhoweibull} easily follows.

Now we will show the lower bound. By \eqref{eq:ellinftyweibull} we have
\begin{equation}
\label{eq:ellrholower1}
\Ex\|(c_iX_i)_{i=1}^k\|_\rho\geq \Ex\|(c_iX_i)_{i \leq e^\rho}\|_\infty
\sim \|(c_i)_{i\leq e^\rho}\|_{\varphi_{r}},
\end{equation}
so it is enough to show that
\begin{equation}
\label{eq:ellrhotwoshow}
\Ex\|(c_iX_i)_{i=1}^k\|_\rho\gtrsim \rho^{1/r} \|(c_i)_{i> e^\rho}\|_\rho.
\end{equation}

Observe that
\[
\Ex\|(c_iX_i)_{i=1}^k\|_\rho=\Ex\|(c_i|X_i|)_{i=1}^k\|_\rho\geq \|(c_i\Ex|X_i|)_{i=1}^k\|_\rho
=\Ex|X_1|\|c\|_{\rho} \gtrsim \|c\|_{\rho}.
\]
In particular, \eqref{eq:ellrhotwoshow} holds for $\rho\leq 2$.

Let $C_1$ be a suitably chosen constant (to be fixed later). If
$\rho^{1/r}\|(c_i)_{i> e^\rho}\|_\rho\leq C_1\|(c_i)_{i\leq e^\rho}\|_{\varphi_{r}}$, then
\eqref{eq:ellrholower1} yields \eqref{eq:ellrhotwoshow}.
Thus we may assume that $\rho>2$ and $\rho^{1/r}\|(c_i)_{i> e^\rho}\|_\rho> C_1\|(c_i)_{i\leq e^\rho}\|_{\varphi_{r}}$.

The variables $X_i$ have log-concave tails, hence \cite[Theorem~1]{LaLCT} yields
\[
\Ex \|(c_iX_i)_{i>e^\rho}\|_{\rho} 
\ge \frac{1}{C}\bigl( \Ex \|(c_iX_i)_{i>e^\rho}\|_\rho^\rho\bigr)^{1/\rho} 
- \sup_{t\in B_{\rho^*}^k} \biggl\| \sum_{i>e^\rho} t_ic_i X_i \biggr\|_\rho.
\]
We have
\[
\bigl( \Ex \|(c_iX_i)_{i>e^\rho}\|_\rho^\rho\bigr)^{1/\rho} 
\sim \rho^{1/r}\|(c_i)_{i>e^\rho}\|_\rho.
\]
Inequality \eqref{eq:comp-moms-sums-Weibull}
 and Lemma \ref{lem:multiplier} (applied with $\rho_1=\rho^*\leq 2$ and $\rho_2\in \{2,r^*\}$)
yield
\begin{align*}
\sup_{t\in B_{\rho^*}^n} \biggl\| \sum_{i>e^\rho} t_ic_i X_i \biggr\|_\rho
&\lesssim (\rho^{1/r}+\rho^{1/2})\max_{i>e^\rho}|c_i|\leq 2 \rho^{1/r}c_{\lceil e^\rho\rceil}
\leq  2 \rho^{1/r}\|(c_i)_{i\leq e^\rho}\|_{\varphi_{r}}
\varphi_r^{-1}\Bigl(\frac{1}{\lfloor e^\rho\rfloor}\Bigr)
\\
&\lesssim \|(c_i)_{i\leq e^\rho}\|_{\varphi_{r}}.
\end{align*}
Therefore 
\[
\Ex \|(c_iX_i)\|_{\rho} 
\geq \frac{1}{C_2}\rho^{1/r}\|(c_i)_{i>e^\rho}\|_\rho-C_3\|(c_i)_{i\leq e^\rho}\|_{\varphi_{r}}
\geq \Big(\frac{1}{C_2}-\frac{C_3}{C_1}\Big)\rho^{1/r}\|(c_i)_{i>e^\rho}\|_\rho.
\]
So to get \eqref{eq:ellrhotwoshow} and conclude the proof it is enough to choose $C_1=2C_2C_3$.
\end{proof}

We shall also use the following lemma which is standard, but  we prove it for the sake of completeness. 
\begin{lem}
\label{lem:WeibGauss}
Let $(X_i)_{i=1}^k$ be iid Weibull r.v.'s with parameter $2$. Then for any norm $\|\cdot\|$ 
on $\er^k$ we have
\[
\Ex\|(X_i)_{i=1}^k\|\sim \Ex\|(g_i)_{i=1}^k\|.
\] 
Moreover, if $(Y_i)_{i=1}^k$ are iid Weibull r.v.'s with parameter $r\in[1, 2]$, then
for any norm  $\|\cdot\|$  on $\er^k$ we have
\begin{equation}	
\label{eq:WeibGaussIneq}
\Ex\|(Y_i)_{i=1}^k\|\gtrsim \Ex\|(g_i)_{i=1}^k\|.
\end{equation}
\end{lem}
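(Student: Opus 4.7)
The plan is to treat the two statements in turn, both via the (Kahane) contraction principle applied to fresh independent Rademachers, in the spirit of the proof of Lemma~\ref{lem:Weibull-to-psi_r}; no appeal to majorising measures or to \eqref{eq:gamma-est-weibull} is needed.

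\emph{Part 1 ($r=2$).} The key elementary observation is that if $X$ is symmetric Weibull with parameter $2$, then $|X|\stackrel{d}{=}\sqrt{(g_1^2+g_2^2)/2}$ for independent standard Gaussians $g_1,g_2$; indeed, $\Pr(g_1^2+g_2^2\ge 2t^2)=e^{-t^2}$ since $g_1^2+g_2^2\sim\chi_2^2$. So I may enlarge the probability space and realise
\[
X_i=\ve_i\sqrt{(g_{i,1}^2+g_{i,2}^2)/2},
\]
with $(\ve_i)$ independent Rademachers and $(g_{i,j})$ iid standard Gaussians, everything jointly independent. The pointwise sandwich
\[
\frac{|g_{i,1}|}{\sqrt 2}\ \le\ \sqrt{\frac{g_{i,1}^2+g_{i,2}^2}{2}}\ \le\ \frac{|g_{i,1}|+|g_{i,2}|}{\sqrt 2},
\]
combined with the contraction principle for Rademacher sums in an arbitrary norm (applied conditionally on the Gaussians), the triangle inequality, and the identity $\Ex\|(\ve_i|g_{i,j}|)_i\|=\Ex\|(g_i)_i\|$ coming from the symmetry of the Gaussians, yields both
\[
\tfrac{1}{\sqrt 2}\Ex\|(g_i)_i\|\ \le\ \Ex\|(X_i)_i\|\ \le\ \sqrt 2\,\Ex\|(g_i)_i\|.
\]

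\emph{Part 2 ($r\in[1,2]$).} Here I compare the Weibull($r$) variables $Y_i$ with the Weibull$(2)$ variables $X_i$ from Part~1 by a direct tail estimate: for every $t\ge 0$,
\[
\Pr(|Y_i|\ge t)\ \ge\ e^{-1}\Pr(|X_i|\ge t).
\]
Indeed for $t\ge 1$ we have $t^r\le t^2$, hence $e^{-t^r}\ge e^{-t^2}$, while for $t\in[0,1]$ both tails lie in $[e^{-1},1]$. Letting $(\delta_i)$ be iid Bernoulli$(1/e)$ independent of the rest, $|\delta_i X_i|$ is stochastically dominated by $|Y_i|$, so these three families can be coupled so that $|Y_i|\ge|\delta_i X_i|$ a.s. The contraction principle applied conditionally (using fresh Rademachers $\ve_i$), followed by Jensen's inequality integrated against $\delta_i$, carried out exactly as in the proof of Lemma~\ref{lem:Weibull-to-psi_r}, gives $\Ex\|(Y_i)\|\ge e^{-1}\Ex\|(X_i)\|$. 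Combining with Part~1 proves \eqref{eq:WeibGaussIneq}.

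\emph{Where the difficulty might lie.} None of the steps is genuinely hard; the whole proof is a repackaging of the coupling and contraction tricks already used in the paper. The only point to be mindful of is bookkeeping for conditional independence: each application of the contraction principle must be performed with the weights $|a_i|,|b_i|$ deterministic, i.e.\ inside the appropriate conditional expectation (over the Rademachers, resp.\ the Bernoullis), before the outer expectation is taken.
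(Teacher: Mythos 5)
Your proof is correct, and it takes a route that differs from the paper's in an interesting way. For the two-sided comparison at $r=2$, the paper argues purely through tail comparisons: $\Pr(|g_i|\ge t)\le e^{-t^2/2}=\Pr(\sqrt 2|X_i|\ge t)$ gives one direction by coupling plus the contraction principle, while the reverse direction needs the lower Gaussian tail bound $\Pr(|g_i|\ge t)\ge \tfrac14 e^{-t^2}$ followed by the Bernoulli-thinning/Jensen device of Lemma~\ref{lem:Weibull-to-psi_r}. You instead exploit the exact identity $|X_i|\stackrel{d}{=}\sqrt{(g_{i,1}^2+g_{i,2}^2)/2}$ (the $\chi_2^2$ law being exponential), and the pointwise sandwich $|g_{i,1}|/\sqrt 2\le \sqrt{(g_{i,1}^2+g_{i,2}^2)/2}\le (|g_{i,1}|+|g_{i,2}|)/\sqrt 2$ then yields both directions by contraction over the Rademachers and the triangle inequality alone; this avoids the thinning step entirely for the harder direction $\Ex\|(X_i)\|\lesssim\Ex\|(g_i)\|$ and gives the clean explicit constants $1/\sqrt 2$ and $\sqrt 2$. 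For \eqref{eq:WeibGaussIneq} your argument is the same mechanism as the paper's (tail domination up to a constant, Bernoulli thinning, coupling, contraction, Jensen), except that you compare Weibull$(r)$ with Weibull$(2)$ via $e^{-t^r}\ge e^{-1}e^{-t^2}$ and then chain through Part 1, whereas the paper compares directly with the Gaussian via $e^{-t^2/2}\le \sqrt e\, e^{-t^r/2}$; both give the claim with universal constants, and your closing remark about applying the contraction principle conditionally (with the random weights frozen) is exactly the right bookkeeping point and matches how the paper itself uses these couplings.
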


\begin{proof} We have $\Pr(|g_i|\geq t)\leq e^{-t^2/2}=\Pr(|\sqrt{2}X_i|\geq t)$. Thus we may
find such a representation of $X_i$'s and $g_i$'s  that $|g_i|\leq |\sqrt{2}X_i|$ a.s.
Let $(\ve_i)_{i=1}^k$ be a sequence of iid symmetric $\pm 1$ r.v.'s (Rademachers) independent of
$(X_i)_{i=1}^k$ and $(g_i)_{i=1}^k$. Then  the contraction principle implies
\[
 \Ex\|(g_i)_{i=1}^k\|= \Ex\|(\ve_i|g_i|)_{i=1}^k\|\leq
\Ex\|(\ve_i|\sqrt{2}X_i|)_{i=1}^k\|=\sqrt{2}\Ex\|(X_i)_{i=1}^k\|.
\]

To justify the opposite inequality  observe that there exists $c>0$ (one may take $c=1/4)$ such that
for all $t\geq 0$, $\Pr(|g_i|\geq t)\geq ce^{-t^2}$ and  proceed  similarly as in the proof of  Lemma~\ref{lem:Weibull-to-psi_r}.

Using the inequality $e^{-t^2/2}\leq Ce^{-t^r/2}$ for $t\geq 0$ (one may take $C=\sqrt e$) and proceeding  in a similar way as above we may  prove \eqref{eq:WeibGaussIneq}.
\end{proof}

Proposition~\ref{prop:ellrhoweibull}, Remark~\ref{rem:ellsmallrhoweibull} and Lemma~\ref{lem:WeibGauss} yield the following bound
for $\ell_\rho$-norms of a~Gaussian sequence.

\begin{cor}
\label{cor:ellrhogauss}
For every $1\leq \rho\leq  \infty$ and every sequence $c=(c_i)_{i\leq k}$ we have
\begin{equation*}
\Ex\bigl\|(c_ig_i)_{i=1}^k\bigr\|_\rho\sim
\|(c_i^*)_{i\leq e^\rho}\|_{\varphi_{2}}
+\sqrt{\rho}\|(c_i^*)_{i> e^\rho}\|_\rho.
\end{equation*}
In particular, for $\rho\leq 2$ we have
\[
\Ex\bigl\|(c_ig_i)_{i=1}^k\bigr\|_\rho\sim \|c\|_\rho.
\]
\end{cor}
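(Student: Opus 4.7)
The plan is to deduce the Gaussian $\ell_\rho$ bound directly from its Weibull counterpart, Proposition~\ref{prop:ellrhoweibull}, using Lemma~\ref{lem:WeibGauss} as the bridge. First I would apply Proposition~\ref{prop:ellrhoweibull} with shape parameter $r=2$ to the vector $(c_iX_i)_{i\leq k}$, where $(X_i)$ are iid Weibull r.v.'s with parameter~$2$. This immediately gives
\[
\Ex\bigl\|(c_iX_i)_{i=1}^k\bigr\|_\rho \sim \|(c_i^*)_{i\leq e^\rho}\|_{\varphi_2} + \sqrt{\rho}\,\|(c_i^*)_{i>e^\rho}\|_\rho,
\]
which is already the right-hand side of the target equivalence.

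Next I would transfer from $(X_i)$ to $(g_i)$ via Lemma~\ref{lem:WeibGauss}. That lemma is phrased for genuine norms on $\er^k$, whereas the map $y\mapsto \|(c_iy_i)\|_\rho$ is only a seminorm when some coordinates of $c$ vanish. This is harmless: one may discard the indices $i$ with $c_i=0$, which affects neither side of the claim, and apply the lemma on the reduced space of indices where $c_i\neq 0$ (on which it becomes a genuine norm). Lemma~\ref{lem:WeibGauss} then yields $\Ex\|(c_iX_i)\|_\rho\sim \Ex\|(c_ig_i)\|_\rho$, and combining this with the display above finishes the first assertion.

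For the \emph{in particular} clause, when $\rho\leq 2$ one has $e^\rho\leq e^2$, so the first summand involves only a uniformly bounded number of coordinates; on a fixed finite number of coordinates both $\|\cdot\|_{\varphi_2}$ and $\|\cdot\|_\rho$ are equivalent to $\|\cdot\|_\infty$, and moreover $\sqrt\rho\sim 1$ on $[1,2]$. Hence the general bound collapses to $\|c\|_\rho$. (Alternatively, one may simply combine Remark~\ref{rem:ellsmallrhoweibull} with the Weibull-to-Gaussian passage from the previous paragraph.)

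There is no genuine obstacle: the substance of the corollary lies in Proposition~\ref{prop:ellrhoweibull} and Lemma~\ref{lem:WeibGauss}, and the present statement is a one-line packaging of the two. The only point requiring a word of justification is the seminorm versus norm subtlety in applying Lemma~\ref{lem:WeibGauss}, which is settled by restricting to the support of $c$.
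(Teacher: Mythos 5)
Your argument is exactly the paper's: the corollary is stated there as an immediate consequence of Proposition~\ref{prop:ellrhoweibull} applied with $r=2$, Lemma~\ref{lem:WeibGauss}, and Remark~\ref{rem:ellsmallrhoweibull}, which is precisely your route (including the alternative you mention for the $\rho\le 2$ clause). Your extra remark about reducing to the support of $c$ so that $y\mapsto\|(c_iy_i)\|_\rho$ is a genuine norm is a correct, minor tidying-up that the paper leaves implicit.
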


\begin{rem}
\label{rem:estellpGauss}
 In the case $c_i=1$ Corollary~\ref{cor:ellrhogauss} yields the well known bound
\[
\Ex\bigl\|(g_i)_{i=1}^k\bigr\|_\rho\sim
\begin{cases}
\rho^{1/2}k^{1/\rho},& 1\leq \rho\leq \Log k,
\\
(\Log k )^{1/2},& \rho\geq \Log k
\end{cases}
\sim (\rho\wedge \Log k)^{1/2} k^{1/\rho}. 
\]
\end{rem}

\begin{proof}[Proof of Theorem \ref{thm:aibjgauss}]
Chevet's inequality \eqref{eq:Chevet}, appplied with $ S=\{(a_is_i)\colon s\in B_{q^*}^m\}$ and 
$T=\{(b_jt_j)\colon t\in B_{p}^n\}$, yields
\[
\Ex\bigl\|(a_ib_jg_{i,j})_{i\leq m,j\leq n}\bigr\|_{\ell_p^n\rightarrow\ell_q^m}
\sim \sup_{s\in B_{q^*}^m}\|(a_is_i)\|_2\Ex\|(b_jg_j)\|_{p^*}
+\sup_{t\in B_{p}^n}\|(b_jt_j)\|_2\Ex\|(a_ig_i)\|_{q}.
\]
Lemma \ref{lem:multiplier} and Corollary \ref{cor:ellrhogauss} yield the assertion.
\end{proof}

\begin{proof}[Proof of Theorem \ref{thm:aibjWeibull}]
Theorem \ref{thm:chevetWeibull}, appplied with $S=\{(a_is_i)\colon s\in B_{q^*}^m\}$ and 
$T=\{(b_jt_j)\colon t\in B_{p}^n\}$, yields
\begin{align*}
\Ex\bigl\|(a_ib_jX_{i,j})_{i\leq m,j\leq n}\bigr\|_{\ell_p^n\rightarrow\ell_q^m}
\sim\, & \Ex\bigl\|(a_ib_jg_{i,j})_{i\leq m,j\leq n}\bigr\|_{\ell_p^n\rightarrow\ell_q^m}
\\
&\qquad
+\sup_{s\in B_{q^*}^m}\|(a_is_i)\|_{r^*}\Ex\|(b_jX_j)\|_{p^*}
+\sup_{t\in B_{p}^n}\|(b_jt_j)\|_{r^*}\Ex\|(a_iX_i)\|_{q}.
\end{align*}
To get the assertion we use Theorem \ref{thm:aibjgauss}, Lemma \ref{lem:multiplier},  Proposition \ref{prop:ellrhoweibull}, and Remark~\ref{rem:ellsmallrhoweibull} together with the following observations:
\begin{itemize}
\item for $q<r\leq2$  we have $\|a\|_{2q^*/(q^*-2)}\geq \|a\|_{r^*q^*/(q^*-r^*)}$, 
and for $p^*<r\leq 2$ we have $\|b\|_{2p/(p-2)}\geq \|b\|_{r^*p/(p-r^*)}$,
\item $\Ex\|(a_iX_i)_{i=1}^m\|_{q}\gtrsim \Ex\|(a_ig_i)_{i=1}^m\|_{q}$ and 
$\Ex\|(b_jX_j)_{j=1}^n\|_{p^*} \gtrsim \Ex\|(b_jg_j)_{j=1}^n\|_{p^*}$, which follows by inequality \eqref{eq:WeibGaussIneq}.	\qedhere
\end{itemize}
\end{proof}

\section{Operator norms of submatrices} 
\label{sect:submatrices}

In this section we  prove Theorem~\ref{thm:submatricespsir} about the norms of submatrices.
To prove it we shall use Theorem \ref{thm:chevetWeibull} and
Corollary \ref{cor:psi-r-Chevet}. Thus, we need to estimate
\[
\Ex\sup_{|I|=k}\bigl(\sum_{i\in I}|X_i|^q\bigr)^{1/q}=\Ex \Bigl(\sum_{i=1}^k (X_i^*)^q\Bigr)^{1/q},
\]
where $(X_1^*,X_2^*,\ldots,X_m^*)$ denotes the non-increasing rearrangement of
$(|X_1|,\ldots,|X_m|)$. This is done in the next two technical lemmas.

\begin{lem}
\label{lem:ellqq}
Let $X_1,\ldots,X_m$ be iid symmetric Weibull r.v.'s with shape parameter $r\in [1,2]$.
Then for every $q\geq 1$ and $1\leq k\leq m$ we have
\[
\Bigl(\Ex \sum_{i=1}^k (X_i^*)^q\Bigr)^{1/q} \sim k^{1/q} \Bigl(\Log\Bigl(\frac{m}{k}\Bigr)\vee q\Bigr)^{1/r}.
\]
\end{lem}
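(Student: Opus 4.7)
The plan is to analyse the order statistics through the counting function $N(t) \coloneqq \#\{j\le m : |X_j|\ge t\}$, which is $\operatorname{Bin}(m, e^{-t^r})$ since $\Pr(|X_j|\ge t) = e^{-t^r}$. The starting point is the Fubini-type identity
\[
\sum_{i=1}^k (X_i^*)^q = q\int_0^\infty t^{q-1} \min(N(t), k)\, dt,
\]
valid because, for each $j$, the contribution $|X_j|^q\mathbf{1}[j\in\text{top-}k]$ arises by integrating $qt^{q-1}$ against $\mathbf{1}[j\in\text{top-}k, |X_j|\ge t]$, and $\#\{j\in\text{top-}k : |X_j|\ge t\} = \min(N(t),k)$. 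Taking expectation reduces the problem to estimating $\Ex\min(N(t),k)$.

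For the upper bound, I would use Jensen's inequality for the concave function $x\mapsto\min(x,k)$ to obtain $\Ex\min(N(t),k)\le \min(me^{-t^r},k)$, and split the integral at $t_0$ with $t_0^r = \Log(m/k)$. The range $[0,t_0]$ contributes exactly $kt_0^q = k\Log(m/k)^{q/r}$. After the substitution $u=t^r$, the range $[t_0,\infty)$ gives $\frac{qm}{r}\int_{\Log(m/k)}^\infty u^{q/r-1}e^{-u}\,du$; standard incomplete Gamma estimates bound this by $k\Log(m/k)^{q/r}$ when $\Log(m/k)\ge q/r$ (using $\int_A^\infty u^{s-1}e^{-u}du\lesssim A^{s-1}e^{-A}$ for $A\ge s$), and by $m\,\Gamma(q/r+1)$ otherwise, which combined with $m\le ke^{q/r}$ and Stirling yields $\lesssim C^q kq^{q/r}$. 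Together,
\[
\Ex\sum_{i=1}^k (X_i^*)^q \lesssim C^q\, k(\Log(m/k)\vee q)^{q/r}.
\]

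For the lower bound I would combine two estimates. First, the top-$k$ average dominates the full average, so $\sum_{i=1}^k (X_i^*)^q \ge \frac{k}{m}\sum_{j=1}^m |X_j|^q$, giving $\Ex\sum\ge k\,\Ex|X_1|^q = k\Gamma(q/r+1) \gtrsim c^q kq^{q/r}$ (Stirling). Second, $\sum_{i=1}^k (X_i^*)^q \ge k(X_k^*)^q$; choosing $t$ with $t^r = \Log(m/(2k))$, so that $\Ex N(t)=2k$, a one-sided binomial concentration bound gives $\Pr(N(t)\ge k)\ge c>0$, hence $\Ex(X_k^*)^q \gtrsim c^q\Log(m/k)^{q/r}$ once $m/k$ exceeds an absolute constant (the complementary case $m\asymp k$ is subsumed by the first estimate, since then $\Log(m/k)\vee q\lesssim q$). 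Adding these two bounds produces $\Ex\sum \gtrsim c^q k(\Log(m/k)\vee q)^{q/r}$.

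Taking $q$-th roots converts $C^q,c^q$ into universal constants (since $q^{1/q}$ is bounded for $q\ge 1$) and gives the claimed two-sided equivalence. The main bookkeeping obstacle is tracking these $q$-dependent multiplicative factors through the Stirling and incomplete-Gamma estimates, and verifying that they are benign after extracting the $q$-th root; everything else is a routine split of the integral according to whether $\Log(m/k)$ or $q$ dominates.
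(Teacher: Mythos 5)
Your argument is correct, but it follows a genuinely different route from the paper. The paper does not work with the counting function at all: it invokes an external order-statistics result (\cite[Theorem~3.2]{LS-order}), which gives $\Ex\sum_{i\le k}(X_i^*)^q\sim k\,t_*$ with $t_*=\inf\{t>0\colon \Ex|X_1|^qI_{\{|X_1|^q>t\}}\le t k/m\}$, and then the whole proof reduces to sandwiching $t_*$ by plugging in the explicit test values $t_1=(2q+\ln(m/k))^{q/r}$ (upper bound, via a direct geometric-series estimate of the truncated moment) and $t_2=\log^{q/r}(m/k)$, $t_3=\tfrac12\Ex|X_1|^q$ (lower bound). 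Your proof instead is self-contained: the layer-cake identity $\sum_{i\le k}(X_i^*)^q=q\int_0^\infty t^{q-1}\min(N(t),k)\,dt$ plus Jensen for the upper bound, and the two elementary lower bounds (top-$k$ average dominating the full average, and $k(X_k^*)^q$ combined with the binomial median/concentration at the level $me^{-t^r}=2k$) exactly parallel the roles of $t_1$, $t_3$ and $t_2$ in the paper, but without the black-box reduction. What the paper's route buys is brevity and no $q$-dependent constants to track; what yours buys is independence from the cited theorem at the cost of the $C^q$/$c^q$ bookkeeping, which, as you note, is harmless after taking $q$-th roots. One small caveat: the incomplete-Gamma bound $\int_A^\infty u^{s-1}e^{-u}\,du\lesssim A^{s-1}e^{-A}$ holds with a universal constant only for $A\gtrsim 2s$ (at $A=s$ the ratio is of order $\sqrt{s}$); in the intermediate range $s\le A\le 2s$ you should either fall back on the crude bound $\le\Gamma(s)$ (which your ``otherwise'' case already handles, since then $m\le ke^{2q/r}$) or accept an extra factor $C^{q/r}$ --- either way it is absorbed by your $C^q$ accounting, so this is a presentation fix, not a gap.
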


\begin{proof}
By, say, \cite[Theorem~3.2]{LS-order}, we get
\[
\Ex  \sum_{i=1}^k (X_i^*)^q \sim kt_*,\quad \mbox{where}\quad 
t_*:=\inf\Bigl\{t>0\colon \Ex |X_1|^q I_{\{|X_1|^q>t\}} \le t\frac{k}{m}  \Bigr\}.
\]

Let $t_1:=(2q+\ln(\frac{m}{k}))^{q/r}$. Then
\begin{align*}
\Ex |X_1|^q I_{\{|X_1|^q>t_1\}}
&\leq \sum_{l=0}^\infty e^{(l+1)q}t_1\Pr(|X_1|>e^lt_1^{1/q})
= t_1\sum_{l=0}^\infty e^{(l+1)q}e^{-e^{lr}t_1^{r/q}}
\\
&\leq t_1e^q\sum_{l=0}^\infty e^{lq}e^{-(1+lr)(2q+\ln(\frac{m}{k}))}
\leq t_1e^{-q}\frac{k}{m}\sum_{l=0}^\infty e^{-(2r-1)ql}< t_1\frac{k}{m}.
\end{align*}
Thus, $t_*^{1/q}\leq t_1^{1/q}\sim (\Log(\frac{m}{k})\vee q)^{1/r}$.

Let $t_2:=\log^{q/r}(m/k)$ and $t_3=\frac{1}{2}\Ex |X_1|^q =  \frac 12 \Gamma( \frac qr +1)$. 
We have
\[
\Ex |X_1|^q I_{\{|X_1|^q>t_2\}}> t_2\Pr(|X_1|>t_2^{1/q})
= t_2e^{-t_2^{r/q}}
=t_2\frac{k}{m},
\]
and
\[
\Ex |X_1|^q I_{\{|X_1|^q>t_3\}}=\Ex |X_1|^q-\Ex |X_1|^q I_{\{|X_1|^q\leq t_3\}}
> \frac{1}{2}\Ex |X_1|^q\geq t_3\frac{k}{m}.
\]
Therefore, $t_*^{1/q}\geq (t_2\vee t_3)^{1/q}\sim (\Log(\frac{m}{k})\vee q)^{1/r}$.
\end{proof}

\begin{lem}
\label{lem:exellq}
Let $X_1,\ldots,X_m$ be iid symmetric Weibull r.v.'s with shape parameter $r\in [1,2]$.
Then for $q\geq 1$, $1\leq k\leq m$ we have
\[
\Ex \Bigl(\sum_{i=1}^k (X_i^*)^q\Bigr)^{1/q} \sim 
\begin{cases}
\Log^{1/r} m& q\geq \Log k
\\
k^{1/q} \bigl(\Log\bigl(\frac{m}{k}\bigr)\vee q\bigr)^{1/r} & q<\Log k
\end{cases}
\sim k^{1/q}\Bigl(\Log\Bigl(\frac{m}{k}\Bigr)\vee (q\wedge \Log k)\Bigr)^{1/r}.
\]
\end{lem}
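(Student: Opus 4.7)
The plan is to prove matching upper and lower bounds by combining Lemma~\ref{lem:ellqq} with two further estimates in each direction.

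For the upper bound I would use Jensen's inequality together with Lemma~\ref{lem:ellqq} to get
\[
\Ex \Bigl(\sum_{i=1}^k (X_i^*)^q\Bigr)^{1/q} \le \Bigl( \Ex \sum_{i=1}^k (X_i^*)^q \Bigr)^{1/q} \lesssim k^{1/q}\bigl(\Log(m/k) \vee q\bigr)^{1/r},
\]
as well as the pointwise bound $(\sum_{i=1}^k(X_i^*)^q)^{1/q} \le k^{1/q} X_1^*$, where by \eqref{eq:ellinftyweibull} applied with $c_i=1$ one has $\Ex X_1^* = \Ex\max_{i\le m}|X_i| \sim \Log^{1/r} m$. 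Taking the minimum of the two resulting bounds and applying the elementary identity $\min(a\vee c,\, a\vee b) = a\vee(b\wedge c)$ with $a=\Log(m/k)$, $b=\Log k$, $c=q$, together with $\Log m \sim \Log(m/k)\vee \Log k$, rewrites the minimum as $k^{1/q}(\Log(m/k)\vee(q\wedge\Log k))^{1/r}$, which is the claimed upper bound.

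For the lower bound I plan to combine three estimates. First, $(\sum_{i=1}^k(X_i^*)^q)^{1/q} \ge X_1^*$ gives, via \eqref{eq:ellinftyweibull}, $\Ex(\cdot)^{1/q} \gtrsim \Log^{1/r} m$. Second, using that the sum of the top $k$ out of $m$ nonnegative numbers is at least a $(k/m)$-fraction of the total, I get $(\sum_{i=1}^k(X_i^*)^q)^{1/q} \ge (k/m)^{1/q} \|(X_i)_{i\le m}\|_q$; Proposition~\ref{prop:ellrhoweibull} applied with $c_i=1$ then yields $\Ex(\cdot)^{1/q} \gtrsim k^{1/q}(q\wedge\Log m)^{1/r}$. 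Third, and most delicately, I would show $\Ex X_k^* \gtrsim \Log^{1/r}(m/k)$ provided $m \ge 4k$: taking $t=2^{-1/r}\Log^{1/r}(m/k)$, the count $N=\#\{i\le m:|X_i|\ge t\}$ is binomial with mean $\sqrt{mk}$ and variance at most $\sqrt{mk}$, so Paley--Zygmund gives $\Pr(N \ge \sqrt{mk}/2) \gtrsim 1$, and since $\sqrt{mk}/2 \ge k$ this forces $\Pr(X_k^* \ge t) \gtrsim 1$; hence $\Ex(\cdot)^{1/q}\ge k^{1/q}\Ex X_k^* \gtrsim k^{1/q}\Log^{1/r}(m/k)$.

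A short case analysis with $a=\Log(m/k)$, $b=\Log k$, $c=q$ finishes the lower bound. When $c\ge b$ the target simplifies to $\sim \Log^{1/r} m$, matched by the first estimate; when $a<c<b$ the target is $k^{1/q}c^{1/r}$ and the second matches since $q<\Log k\le \Log m$ there; when $c\le a$ and $m\ge 4k$ the third estimate matches the target $k^{1/q}a^{1/r}$; finally when $c\le a$ and $m<4k$ we have $a=O(1)$, so the second estimate already gives $k^{1/q}\gtrsim k^{1/q}a^{1/r}$. I expect the main obstacle to be the concentration step, namely showing $\Ex X_k^*\gtrsim \Log^{1/r}(m/k)$ with absolute constants independent of $r\in[1,2]$, $k$, and $m$; the other ingredients are direct consequences of Lemma~\ref{lem:ellqq}, Proposition~\ref{prop:ellrhoweibull}, and~\eqref{eq:ellinftyweibull}.
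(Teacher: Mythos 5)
Your proof is correct, and while the upper bound coincides with the paper's (Jensen plus Lemma~\ref{lem:ellqq}; your trivial bound by $k^{1/q}\Ex X_1^*$ plays the role of the paper's reduction to $\Ex\max_{i\le m}|X_i|$ in the regime $q\ge \Log k$, where $k^{1/q}\le e$), your lower bound takes a genuinely different route. The paper splits into $q\in[1,2]$ (Khintchine--Kahane), $q\ge\Log k$, and $2\le q\le\Log k$, and in the last case upgrades the $L_q$-moment estimate of Lemma~\ref{lem:ellqq} to a first-moment bound via the comparison theorem for vectors with log-concave tails (\cite{LaLCT}) together with the Gluskin--Kwapie{\'n} bound \eqref{eq:comp-moms-sums-Weibull}, treating small $k$ ($k\le C^q$) separately. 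You instead minorize the functional by the three elementary statistics $X_1^*$, $(k/m)^{1/q}\|(X_i)_{i\le m}\|_q$ and $k^{1/q}X_k^*$, estimated via \eqref{eq:ellinftyweibull}, Proposition~\ref{prop:ellrhoweibull} with $c_i\equiv 1$, and a binomial Paley--Zygmund argument, and the lattice identity $\min(a\vee c,a\vee b)=a\vee(b\wedge c)$ closes the case analysis; the concentration step does work as stated, since for $m\ge 4k$ and $t^r=\tfrac12\ln(m/k)$ the count has mean $\sqrt{mk}\ge 2k$ and variance at most its mean, giving $\Pr(X_k^*\ge t)\ge 1/8$ with constants independent of $r\in[1,2]$. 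What each approach buys: yours is uniform in $q\ge1$, avoids a second appeal to \cite{LaLCT} and the small-$k$ dichotomy, at the price of the order-statistics computation; the paper's reuses machinery already set up (Lemma~\ref{lem:ellqq} for both directions) and needs no separate estimate of $\Ex X_k^*$. Two routine verifications should be written out in a final version: that Proposition~\ref{prop:ellrhoweibull} with $c_i\equiv1$ indeed gives $\Ex\|(X_i)_{i\le m}\|_q\gtrsim m^{1/q}(q\wedge\Log m)^{1/r}$ (the Weibull analogue of Remark~\ref{rem:estellpGauss}, including the boundary case $e^q<m\lesssim e^{2q}$), and that in your last case $m<4k$ forces $\Log(m/k)\sim 1$, so the second estimate suffices there.
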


\begin{proof}
If $q\ge \Log k$, then
\[
\Ex \Bigl( \sum_{i=1}^k (X_i^*)^q \Bigr)^{1/q}  \sim \Ex \max_{i\le m} |X_i|\sim \Log^{1/r} m,
\]
where the last (standard) bound follows e.g.\ by Lemma \ref{lem:ellqq} applied with $k=q=1$.

If $q\in [1,2]$ then Khinchine-Kahane-type inequality (cf., \cite[Corollary 1.4]{LatalaStrzeleckaMat}) and Lemma \ref{lem:ellqq} yield
\[
\Ex \Bigl(\sum_{i=1}^k (X_i^*)^q\Bigr)^{1/q} \sim
\Bigl(\Ex \sum_{i=1}^k (X_i^*)^q\Bigr)^{1/q}\sim k^{1/q} \Log^{1/r}\Bigl(\frac{m}{k}\Bigr).
\]

From now on assume that $2\leq q \leq \Log k$. By Lemma \ref{lem:ellqq}
\[
\Ex \Bigl( \sum_{i=1}^k (X_i^*)^q \Bigr)^{1/q}\leq \Bigl( \Ex \sum_{i=1}^k (X_i^*)^q \Bigr)^{1/q}
\sim k^{1/q}\Bigl(\Log\Bigl(\frac{m}{k}\Bigr)\vee q\Bigr)^{1/r}.
\]
Variables $X_i$ have log-concave tails, so by \cite[Theorem 1]{LaLCT},
\begin{align*}
\Ex \Bigl( \sum_{i=1}^k (X_i^*)^q \Bigr)^{1/q}
&=\Ex\sup_{|I|=k} \sup_{s\in B_{q^*}^I} \sum_{i\in I} s_iX_i 
\\
&\geq \frac{1}{C_1}\Bigl\|\sup_{|I|=k} \sup_{s\in B_{q^*}^I} \sum_{i\in I} s_iX_i \Bigr\|_q
-\sup_{|I|=k} \sup_{s\in B_{q^*}^I} \Bigl\|\sum_{i\in I} s_iX_i\Bigr\|_q.
\end{align*}
Since $q^*\leq 2\leq r^*$,  \eqref{eq:comp-moms-sums-Weibull} implies that
 for any $I\subset [m]$ and any $s\in B_{q^*}^I$ we have ,
\[
\Bigl\|\sum_{i\in I} s_iX_i\Bigr\|_q\lesssim  q^{1/r}\|s\|_{r^*}+q^{1/2}\|s\|_2
\leq q^{1/r}+q^{1/2}\le 2 q^{1/r}.
\]
This together with Lemma \ref{lem:ellqq}  yields
\[
\Ex \Bigl( \sum_{i=1}^k (X_i^*)^q \Bigr)^{1/q}
\geq \frac{1}{C_1C_2}k^{1/q}\Bigl(\Log\Bigl(\frac{m}{k}\Bigr)\vee q\Bigl)^{1/r}-C_3q^{1/r}.
\]
Thus if $k\geq (2C_1C_2C_3)^q$ we get 
$\Ex( \sum_{i=1}^k (X_i^*)^q )^{1/q}\geq \frac{1}{2C_1C_2}k^{1/q}(\Log(\frac{m}{k})\vee q)^{1/r}$.
If $k\leq (2C_1C_2C_3)^q$ then $k^{1/q}\sim 1$ and
$\Ex( \sum_{i=1}^k (X_i^*)^q )^{1/q}\geq (\Ex (X_1^*)^q)^{1/q}\sim (\Log m\vee q)^{1/r}$.
\end{proof}

\begin{proof}[Proof of Theorem \ref{thm:submatricespsir}]
We  use Theorem~\ref{thm:chevetWeibull} and
Corollary~\ref{cor:psi-r-Chevet} with
\[
S=\bigcup_{I}B_{q*}^I,\quad T=\bigcup_{J}B_{p}^{J},
\]
where $B_{q*}^I$ is the unit ball in the space $\ell_{q^*}^I$, and the sums run over, 
respectively, all sets $I\subset[m]$ and $J\subset[n]$ such that $|I|=k$ and $|J|=l$.
We only need to estimate the quantities on the right-hand side of the two-sided bounds from 
Theorem~\ref{thm:chevetWeibull} and Corollary~\ref{cor:psi-r-Chevet}.
We have for $\rho\in\{2,r^*\}$,
\[
\sup_{s\in S}\|s\|_{\rho}=k^{(1/q-1/\rho^*)\vee 0},
\quad \sup_{t\in T}\|t\|_{\rho}=l^{(1/p^*-1/\rho^*)\vee 0}.
\]		
Lemmas \ref{lem:exellq} and \ref{lem:WeibGauss} yield
\begin{align*}
\Ex\sup_{s\in S}\sum_{i=1}^ms_iX_i  = \Ex \Bigl(\sum_{i=1}^k (X_i^*)^q\Bigr)^{1/q}
&\sim k^{1/q} \Bigl(\Log\Bigl(\frac{m}{k}\Bigr)\vee (q\wedge \Log k)\Bigr)^{1/r},
\\
\Ex\sup_{s\in S}\sum_{i=1}^ms_ig_i = \Ex \Bigl(\sum_{i=1}^k (g_i^*)^q\Bigr)^{1/q}
&\sim k^{1/q} \Bigl(\Log\Bigl(\frac{m}{k}\Bigr)\vee (q\wedge \Log k)\Bigr)^{1/2}.
\end{align*}
Similarily,
\begin{align*}
\Ex\sup_{t\in T}\sum_{i=1}^nt_iX_i = \Ex \Bigl(\sum_{i=1}^l (X_i^*)^{p^*}\Bigr)^{1/p^*}
&\sim l^{1/p^*} \Bigl(\Log\Bigl(\frac{n}{l}\Bigr)\vee (p^*\wedge \Log l)\Bigr)^{1/r},
\\
\Ex\sup_{t\in T}\sum_{i=1}^n t_ig_i = \Ex \Bigl(\sum_{i=1}^l (g_i^*)^{p^*}\Bigr)^{1/p^*}
&\sim l^{1/p^*} \Bigl(\Log\Bigl(\frac{n}{l}\Bigr)\vee (p^*\wedge \Log l)\Bigr)^{1/2}.
\qedhere
\end{align*}	
\end{proof}

\vspace{0.4cm}

  \bibliographystyle{amsplain}
  \bibliography{matrices}

\end{document}